\newcommand{\R}{\mathbb{R}}
\newcommand{\Q}{\mathbb{Q}}
\newcommand{\N}{\mathbb{N}}
\newcommand{\de}{\delta}
\newcommand{\p}{\varphi}
\newcommand{\e}{\varepsilon}
\newcommand{\ro}{\varrho}
\newcommand{\oo}{\overline}
\newcommand{\Sz}{\mathrm{Sz}}
\newcommand{\inj}{\hat{\otimes}_{\e}}
\newcommand{\abs}[1]{\lvert#1\rvert}
\newcommand{\n}[1]{\|#1\|}
\renewcommand{\span}{\mathrm{span}}
\newcommand{\diam}{\mathrm{diam}}
\newcommand{\ccup}{\scalebox{0.95}{$\bigcup$}}
\renewcommand{\leq}{\leqslant}
\renewcommand{\geq}{\geqslant}
\renewcommand{\frown}{\mathbin{\raisebox{0.3ex}{$\smallfrown$}}}
\newcommand{\vertiii}[1]{{\left\vert\kern-0.25ex\left\vert\kern-0.25ex\left\vert #1 
    \right\vert\kern-0.25ex\right\vert\kern-0.25ex\right\vert}}
\newcommand{\vertiiib}[1]{{\Biggl\vert\kern-0.25ex\Biggl\vert\kern-0.25ex\Biggl\vert #1 
    \Biggr\vert\kern-0.25ex\Biggr\vert\kern-0.25ex\Biggr\vert}}
\newtheorem*{main_theorem}{Main Theorem}
\newtheorem*{corollary*}{Corollary}
\newtheorem{theorem}{Theorem}
\newtheorem{lemma}[theorem]{Lemma}
\newtheorem{proposition}[theorem]{Proposition}
\theoremstyle{definition}
\newtheorem*{definition}{Definition}
\theoremstyle{remark}
\newtheorem*{remark}{Remark}
\newtheorem*{problem}{Question}
\begin{document}
\title[The Szlenk power type and tensor products]{The Szlenk power type and tensor products\\ of Banach spaces}

\author[S. Draga]{Szymon Draga}
\address{Institute of Mathematics, University of Silesia, Bankowa 14, 40-007 Katowice, Poland}
\email{szymon.draga@gmail.com}

\author[T. Kochanek]{Tomasz Kochanek}
\address{Institute of Mathematics, Polish Academy of Sciences, \'Sniadeckich 8, 00-656 Warsaw, Poland\, {\rm and}\, Institute of Mathematics, University of Warsaw, Banacha~2, 02-097 Warsaw, Poland}
\email{tkoch@impan.pl}

\subjclass[2010]{Primary 46B20, 46B28}
\keywords{}
\thanks{}

\begin{abstract}
We prove a formula for the Szlenk power type of the injective tensor product of Banach spaces with Szlenk index at most $\omega$. We also show that the Szlenk power type as well as summability of the Szlenk index are separably determined, and we extend some of our recent results concerning direct sums.
\end{abstract}
\maketitle

\section{Introduction}
The notion of Szlenk index was introduced in \cite{szlenk} in order to show that there is no universal space in the class of all separable reflexive Banach spaces. Since then it has proven to be an~extremely useful tool in the~Banach space theory. The geometry of a~given Banach space with Szlenk index $\omega$ heavily depends on the so-called Szlenk power type which encodes the rate of cutting out the~unit dual ball by iterates of Szlenk derivations; it is strictly connected with the asymptotic moduli of smoothness and convexity ({\it cf. }\cite{lancien} and the references therein). In this paper, motivated mainly by the work of Causey \cite{causey}, we deal with determining the Szlenk power type of injective tensor products of Banach spaces, which in some cases should lead us to getting new information about asymptotic geometry of spaces of compact operators.

For a~Banach space $X$ we denote by $B_X$ and $S_X$ the unit ball and the unit sphere of $X$, respectively. If $K$ is a~weak$^\ast$-compact subset of $X^\ast$ and $\e>0$, then we define the $\e$-{\it Szlenk derivation} of $K$ by
$$
\iota_\e K=\bigl\{x^\ast\in K\colon \mathrm{diam}(K\cap U)>\e\mbox{ for every }w^\ast\mbox{-open neighborhood }U\mbox{ of }x^\ast\bigr\}
$$
and its iterates by $\iota_\e^0K=K$, $\iota_\e^{\alpha+1}K=\iota_\e(\iota_\e^\alpha K)$ for any ordinal $\alpha$, and $\iota_\e^\alpha K=\bigcap_{\beta<\alpha}\iota_\e^\beta K$ for any limit ordinal $\alpha$. Then the $\e$-{\it Szlenk index} of $X$, $\Sz(X,\e)$, is defined as the least ordinal $\alpha$ (if any such exists) for which $\iota_\e^\alpha B_{X^\ast}=\varnothing$; the {\it Szlenk index} of $X$ is defined as $\Sz(X)=\sup_{\e>0}\Sz(X,\e)$. By compactness $\Sz(X,\e)$ is always a successor ordinal and the condition $\Sz(X)\leq\omega$ is equivalent to $\Sz(X,\e)$ being finite for every $\e>0$. The fact that the function $(0,1)\ni\e\xmapsto[]{\phantom{xx}}\Sz(X,\e)$ is submultiplicative ({\it cf. }\cite[Prop.~4]{lancien}) implies that there exists a~finite limit
$$
\mathsf{p}(X)\coloneqq\lim_{\e\to 0+}\frac{\log\Sz(X,\e)}{\abs{\log\e}},
$$
which is called the {\it Szlenk power type} of $X$.

For any Banach spaces $X$ and $Y$ we denote by $X\inj Y$ their injective tensor product (and refer the reader to \cite{ryan} for any unexplained issues concerning this notion). In \cite{causey}, Causey proved that the Szlenk index of $X\inj Y$ behaves generally well---in particular, we have $\Sz(X\inj Y)\leq\omega$ whenever $\Sz(X)\leq\omega$ and $\Sz(Y)\leq\omega$ and both $X$ and $Y$ are separable. This makes sensible the question of determining the value of $\mathsf{p}(X\inj Y)$. Our main result, which concerns not necessarily separable spaces, reads thus as follows.
\begin{main_theorem}\label{thmtensor}
For any Banach spaces $X$, $Y$ with $\Sz(X)\leq\omega$ and $\Sz(Y)\leq\omega$ we have
$$
\mathsf{p}(X\inj Y)=\max\{\mathsf{p}(X),\mathsf{p}(Y)\}.
$$
\end{main_theorem}

The key motivation for studying the Szlenk power type stems from the fact that it corresponds to the~asymptotic moduli of smoothness and convexity introduced by Milman~\cite{milman}. Knaust, Odell and Schlumprecht~\cite{KOS} showed that a~separable Banach space $X$ satisfies $\Sz(X)\leq\omega$ if and only if it can be given an~equivalent asymptotically uniformly smooth norm (with a~power type modulus) or, equivalently, a~norm whose dual norm is weak$^\ast$ asymptotically uniformly convex (with a~power type modulus)---this was later extended to the nonseparable case by Raja~\cite{raja}. Godefroy, Kalton and Lancien~\cite{gkl} gave an~exact quantitative result by showing that the Szlenk power type corresponds to the optimal power types of both the above mentioned moduli; for example, $\mathsf{p}(X)$ is the conjugate of the supremum over all those $q>1$ for which $X$ can be renormed to have asymptotic modulus of smoothness dominated by $Ct^q$ with some $C>0$. 

Consequently, our result gives some information on asymptotic geometry of the space $\mathcal{K}(X,Y)$ of compact operators acting between certain Banach spaces $X$ and $Y$ (recall that if either $X^\ast$ or $Y$ has the approximation property, then $\mathcal{K}(X,Y)$ is isometrically isomorphic to $X^\ast\inj Y$; {\it cf. }\cite[Cor.~4.13]{ryan}).
\begin{corollary*}
If $X$ and $Y$ are Banach spaces so that either $X^\ast$ or $Y$ has the approximation property and both $\Sz(X^\ast)$ and $\Sz(Y)$ are at most $\omega$, then
$$
\mathsf{p}(\mathcal{K}(X,Y))=\max\{\mathsf{p}(X^\ast),\mathsf{p}(Y)\}.
$$
In particular, for all $1<p,q<\infty$ we have
$$
\mathsf{p}(\mathcal{K}(\ell_p,\ell_q))=\max\Bigl\{p,\frac{q}{q-1}\Bigr\}.
$$
\end{corollary*}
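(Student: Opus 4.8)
The plan is to read the Corollary off from the Main Theorem. Assume that $X^\ast$ or $Y$ has the approximation property; then by \cite[Cor.~4.13]{ryan} the natural operator $\mathcal K(X,Y)\to X^\ast\inj Y$ is an isometric isomorphism. Since each quantity $\Sz(\cdot,\e)$, and hence also $\mathsf p(\cdot)$, is an isometric invariant, and since $\Sz(X^\ast)\leq\omega$ and $\Sz(Y)\leq\omega$ by hypothesis, the Main Theorem applied to the pair $(X^\ast,Y)$ gives
\[
\mathsf p(\mathcal K(X,Y))=\mathsf p(X^\ast\inj Y)=\max\{\mathsf p(X^\ast),\mathsf p(Y)\},
\]
which is the first assertion.

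For the second assertion fix $1<p,q<\infty$ and set $p'=p/(p-1)$. Then $\ell_p^\ast=\ell_{p'}$ isometrically; both $\ell_{p'}$ and $\ell_q$ are reflexive spaces with a normalized $1$-unconditional basis, so $\Sz(\ell_{p'})=\Sz(\ell_q)=\omega$ (this also drops out of the computation below); and $\ell_p^\ast=\ell_{p'}$ has a Schauder basis, hence the approximation property. Thus the first part applies with $X=\ell_p$, $Y=\ell_q$ and yields $\mathsf p(\mathcal K(\ell_p,\ell_q))=\max\{\mathsf p(\ell_{p'}),\mathsf p(\ell_q)\}$, so it remains to recall that $\mathsf p(\ell_r)=r/(r-1)$ for every $1<r<\infty$.

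To verify this one computes the Szlenk derivations of the dual ball $B_{\ell_r^\ast}=B_{\ell_{r'}}$: because the weak$^\ast$ topology on bounded sets is coordinatewise convergence and $\n{x^\ast+te_n}^{r'}\to\n{x^\ast}^{r'}+t^{r'}$ as $n\to\infty$, the derivation $\iota_\e$ shrinks a ball of radius $\ro\geq\e$ by a factor $1-\Theta(\e^{r'})$, while $\iota_\e(\ro B_{\ell_{r'}})=\varnothing$ as soon as $2\ro\leq\e$; iterating yields $c_1\e^{-r'}\leq\Sz(\ell_r,\e)\leq c_2\e^{-r'}\abs{\log\e}$ for all small $\e>0$, and the definition of the power type then gives $\mathsf p(\ell_r)=r/(r-1)$. (Alternatively, this is immediate from \cite{gkl}: the canonical norm of $\ell_r$ is asymptotically uniformly smooth with modulus of the optimal power type $r$, whose conjugate exponent is $r/(r-1)$.) Consequently $\mathsf p(\ell_{p'})=p'/(p'-1)=p$ and $\mathsf p(\ell_q)=q/(q-1)$, so the maximum above equals $\max\{p,\,q/(q-1)\}$. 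The only step that carries real computation is this evaluation of $\mathsf p(\ell_r)$; everything else is a purely formal deduction from the Main Theorem together with standard identifications, so no genuine obstacle arises.
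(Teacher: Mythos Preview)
Your proposal is correct and follows exactly the route the paper intends: the paper does not give a separate proof of the Corollary but simply notes (in the paragraph preceding it) that $\mathcal{K}(X,Y)\cong X^\ast\inj Y$ isometrically under the approximation-property hypothesis \cite[Cor.~4.13]{ryan}, so the Corollary is an immediate application of the Main Theorem to the pair $(X^\ast,Y)$. Your additional verification that $\mathsf{p}(\ell_r)=r/(r-1)$ is standard and not spelled out in the paper, but it is consistent with the Godefroy--Kalton--Lancien result \cite{gkl} quoted in the introduction.
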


In the next section we recall some necessary terminology and tools concerning block/tree estimates and asymptotic structures that are essential to proving our main result. In Section~3 we give a~proof in the separable case, where the crucial step is to obtain a~connection between $\mathsf{p}(X)$ and the optimal exponent for subsequential upper tree estimates (see Proposition~\ref{P2} below). In Section~4 we deal with the nonseparable case. We then conclude the paper with extending some of our recent results \cite{dk} on summability of the~Szlenk index and the~Szlenk power type of direct sums.

\section{Tools}
Recall that a~sequence $\mathsf{E}=(E_n)$ of finite-dimensional subspaces of $X$ is called a~{\it finite-dimensional decomposition} (FDD for short) if every $x\in X$ has a~unique representation $x=\sum_{n=1}^\infty x_n$ with $x_n\in E_n$ for every $n\in\N$. In such a~case we denote by $P_n^{\mathsf{E}}$ the $n$th canonical projection $X\to E_n$ and for every $z\in c_{00}(\bigoplus_{n=1}^\infty E_n)$ we set $\mathrm{supp}_{\mathsf{E}}z=\{n\in\N\colon P_n^{\mathsf{E}}z\not=0\}$. A~(finite or infinite) sequence $(z_n)$ in $X$ is called a~{\it block sequence} (with respect to $\mathsf{E}$) if for all suitable $n$'s we have
$$
\max\,\mathrm{supp}_{\mathsf{E}}z_n<\min\,\mathrm{supp}_{\mathsf{E}}z_{n+1}.
$$

An~FDD is called {\it shrinking} if $X^\ast$ coincides with the norm closure of $c_{00}(\bigoplus_{n=1}^\infty E_n^\ast)$, that is, the subset of $X^\ast$ that consists of all functionals $(x_n^\ast)_{n=1}^\infty\in\prod_{n=1}^\infty E_n^\ast$ with $x_n^\ast\not=0$ for finitely many $n$'s.

If $\mathsf{E}=(E_n)$ is an~FDD for $X$ and $V$ is a~Banach space with a~normalized, $1$-unconditional basis $(v_n)$, then we say that $\mathsf{E}$ {\it satisfies subsequential} $C$-$V$-{\it upper block estimates}, with some $C\geq 1$, provided that for every normalized block sequence $(z_n)\subset X$ (with respect to $\mathsf{E}$) and any finitely supported sequence of scalars $(a_n)$ we have
$$
\Biggl\|\sum_{n=1}^\infty a_nz_n\Biggr\|\leq C\Biggl\|\sum_{n=1}^\infty a_nv_{m_n}\Biggr\|,\quad\mbox{where }m_n=\min\,\mathrm{supp}_{\mathsf{E}}z_n.
$$

We shall now recall some terminology concerning trees in Banach spaces. First, define
$$
T_l=\bigl\{(n_1,\ldots,n_l)\colon n_1<\ldots<n_l\mbox{ are in }\N\bigr\}\quad\mbox{ for }l\in\N.
$$
We consider the trees $S_l=\ccup_{j=1}^l T_j$ for $l\in\N\cup\{\infty\}$ ordered by the initial segment relation, that is, for $\alpha=(m_1,\ldots,m_k)$ and $\beta=(n_1,\ldots,n_l)$ we write $\alpha\leq\beta$ iff $k\leq l$ and $m_i=n_i$ for each $1\leq i\leq k$. For any $\alpha=(m_1,\ldots,m_k)$ we set $\abs{\alpha}=k$ and call it the {\it length} of $\alpha$. For each $l\in\N\cup\{\infty\}$, we say that $S_l$ is {\it of order} $l$; in other words, the order of $S_l$ is the largest possible length of a~{\it node} in $S_l$. We say that $\beta$ is a~{\it successor} of $\alpha$ if $\abs{\beta}=\abs{\alpha}+1$ and $\alpha\leq\beta$, so $\beta=\alpha\!\frown\! k$ for some $k\in\N$, $k>\max\alpha$, where $\frown$ stands for concatenation.

Let $\sigma$ be any set and let $\sigma^{<\omega}$ be the collection of all finite sequences in $\sigma$. A~family $\mathcal{F}\subset\sigma^{<\omega}$, ordered by the initial segment relation, is called a~{\it tree on} $\sigma$ if it is tree-isomorphic to one of $S_l$'s ($l\in\N\cup\{\infty\}$) and is closed under taking initial segments. The order of $\mathcal{F}$ is, by definition, the same as the order of the corresponding tree $S_l$ and we denote it by $\mathrm{ord}(\mathcal{F})$. It is sometimes convenient to write a~tree on $\sigma$ in the form $(x_\alpha)_{\alpha\in S_l}$, where each $x_\alpha\in\sigma$; this is then identified with
$$
\mathcal{F}=\bigl\{(x_{(m_1)},x_{(m_1,m_2)},\ldots,x_{(m_1,\ldots,m_k)})\colon \alpha=(m_1,\ldots,m_k)\in S_l\bigr\}.
$$
By a {\it branch} of $\mathcal{F}$ we mean any maximal linearly ordered subset of $\mathcal{F}$, which we identify with a~(finite or infinite) set of the form $\{x_{(m_1)},x_{(m_1,m_2)},\ldots\}$.

If $(\beta_i)_{i=1}^\infty$ is the~sequence of all successors of some $\alpha$ with $0\leq\abs{\alpha}<l$, then (under the above convention) the sequence $(x_{\beta_i})_{i=1}^\infty$ is called an~$s$-{\it subsequence} of $\mathcal{F}$. If $\sigma$ is a~subset of a~vector space equipped with some topology $\tau$, then we say that $\mathcal{F}$ is $\tau$-{\it null} provided every $s$-subsequence of $\mathcal{F}$ is $\tau$-null. We shall be mainly concerned with weakly null trees on Banach spaces and weak$^\ast$-null trees on dual Banach spaces.

\begin{definition}[{{\it cf. }\cite{AJO}}]
Let $X$ be a Banach space. We say that a~sequence $(x_j)_{j=1}^n\subset S_X$ is an~$\ell_1^+$-$\ro$-sequence, for some $\ro\in (0,1]$, if
$$
\Biggl\|\sum_{j=1}^na_jx_j\Biggr\|\geq\ro\sum_{j=1}^na_j\quad\mbox{for every }(a_j)_{j=1}^n\subset [0,\infty).   
$$
If $\mathcal{F}$ is a tree on $X$, then we say that it is an~$\ell_1^+$-$\ro$-{\it weakly null tree} provided it is weakly null and its every node is an~$\ell_1^+$-$\ro$-sequence. 
\end{definition}

\noindent
According to results by Alspach, Judd and Odell \cite{AJO}, the behavior of Szlenk derivations of $B_{X^\ast}$ can conveniently be described in terms of the quantities 
$$
\mathrm{I}_{w,\ro}^+(X)\coloneqq\sup\bigl\{\mathrm{ord}(\mathcal{F})\colon \mathcal{F}\mbox{ is an }\ell_1^+\mbox{-}\ro\mbox{-weakly null tree on }S_X\bigr\}\quad (0<\ro<1).
$$
Originally, they considered derivations defined by
$$
P_\e(K)=\Bigl\{x^\ast\in K\colon\scalebox{1.1}{$\exists$}\,{(x_n^\ast)\subset K},\,\,\, x_n^\ast\xrightarrow[]{\,\,w\ast\,\,}x^\ast\mbox{ and }\liminf_n\n{x_n^\ast-x^\ast}\geq\e\Bigr\}.
$$
However, it is easily seen that for every weak$^\ast$-compact set $K\subset X^\ast$ and $\e\in (0,1)$ we have
$$
\iota_\e K\subseteq P_{\e/2}(K)\quad\mbox{ and }\quad P_\e(K)\subseteq\iota_{\e^\prime} K\quad\mbox{for each }0<\e^\prime<\e,
$$
and hence we can rephrase their results in the following form.
\begin{theorem}[{{\it cf. }\cite[Prop. 4.3, 4.10]{AJO}}]\label{AJO_theorem}
If $X$ is a separable Banach space with $\Sz(X)\leq\omega$, then for all $n\in\N$ and $\e,\ro\in (0,1)$ we have:
\begin{itemize}
\item if $\iota_\e^nB_{X^\ast}\not=\varnothing$, then there exists an~$\ell_1^+$-$\frac{1}{16}\e$-weakly null tree on $S_X$ of order $n$;

\vspace*{1mm}
\item if there exists an $\ell_1^+$-$\ro$-weakly null tree on $S_X$ of order $n$, then $\iota_{\de}^n B_{X^\ast}\not=\varnothing$ for every $0<\de<\ro$.
\end{itemize}
\end{theorem}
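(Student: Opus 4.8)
The plan is to derive both assertions from the corresponding statements for the derivation $P_\e$, proved by Alspach, Judd and Odell in \cite[Prop.~4.3,~4.10]{AJO}, by transporting them across the two inclusions $\iota_\e K\subseteq P_{\e/2}(K)$ and $P_\e(K)\subseteq\iota_{\e'}(K)$ (for $0<\e'<\e$) recorded above. The only auxiliary fact required is that, just like $\iota_\e$, the operation $P_\e$ is monotone with respect to inclusion of weak$^\ast$-compact sets, that is, $K\subseteq L$ implies $P_\e(K)\subseteq P_\e(L)$. Thanks to this, the two inclusions iterate: a~routine induction on $n$ (there are no limit stages to worry about, since $\Sz(X)\leq\omega$ forces every ordinal appearing here to be finite) yields $\iota_\e^nK\subseteq P_{\e/2}^nK$ and, for any $0<\de<\ro$, $P_\ro^nK\subseteq\iota_\de^nK$, valid for every weak$^\ast$-compact $K\subseteq X^\ast$ and every $n\in\N$. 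In the inductive step one applies monotonicity of the outer derivation together with the basic inclusion to the set produced at the previous stage.

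For the first bullet, assume $\iota_\e^nB_{X^\ast}\not=\varnothing$. The first iterated inclusion with $K=B_{X^\ast}$ gives $P_{\e/2}^nB_{X^\ast}\not=\varnothing$, and then \cite[Prop.~4.3]{AJO} furnishes an~$\ell_1^+$-$\ro$-weakly null tree on $S_X$ of order $n$, where $\ro$ is the product of the factor $\tfrac18$ lost in the cited proposition and the factor $\tfrac12$ lost in passing from $\iota_\e$ to $P_{\e/2}$, that is, $\ro=\tfrac1{16}\e$. (One should of course verify that AJO's notion of a~weakly null tree with lower $\ell_1$-estimates agrees with our $\ell_1^+$-$\ro$-weakly null trees and that the constant in \cite[Prop.~4.3]{AJO} is indeed $\tfrac18$; these are routine matters of translation.)

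For the second bullet, assume there is an~$\ell_1^+$-$\ro$-weakly null tree on $S_X$ of order $n$. By \cite[Prop.~4.10]{AJO} this implies $P_{\ro'}^nB_{X^\ast}\not=\varnothing$ for every $\ro'<\ro$. Now fix $\de\in(0,\ro)$, choose $\ro'$ with $\de<\ro'<\ro$, and apply the second iterated inclusion with $K=B_{X^\ast}$: since $P_{\ro'}^nB_{X^\ast}\subseteq\iota_\de^nB_{X^\ast}$, we conclude $\iota_\de^nB_{X^\ast}\not=\varnothing$, as desired.

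Because the cited propositions carry the analytic content, the argument is essentially bookkeeping, and the one point calling for genuine care is the faithful identification of the quantitative statements and tree terminology of \cite{AJO} with the conventions set up in this section. We note finally that separability of $X$ enters only through the weak$^\ast$-metrizability of bounded subsets of $X^\ast$, on which the inclusion $\iota_\e K\subseteq P_{\e/2}(K)$ rests.
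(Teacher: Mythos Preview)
Your proposal is correct and follows exactly the route the paper indicates: the paper does not give a separate proof of this theorem but simply records the two inclusions $\iota_\e K\subseteq P_{\e/2}(K)$ and $P_\e(K)\subseteq\iota_{\e'}K$ in the paragraph preceding the statement and declares that ``hence we can rephrase their results in the following form.'' Your write-up is just a fleshed-out version of that sentence, making explicit the iteration of the inclusions and the bookkeeping of constants (the $\tfrac18$ from \cite[Prop.~4.3]{AJO} combined with the $\tfrac12$ from passing to $P_{\e/2}$ to yield $\tfrac1{16}$).
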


\begin{remark}
The above assertion holds true for $X^\ast$ being separable and without assuming that $\Sz(X)\leq\omega$, provided that one considers weakly null trees of higher orders being countable ordinals ({\it cf. }\cite[\S 3]{AJO}). Then the $\ell_1^+$-{\it weak index} of $X$ defined by the formula
$$
\mathrm{I}_w^+(X)=\sup_{0<\ro<1}\mathrm{I}_{w,\ro}^+(X)
$$
happens to be exactly equal to $\Sz(X)$ ({\it cf. }\cite[Thm.~4.2]{AJO}). We shall not go into these details here, as we are exclusively concerned with the case where $\Sz(X)\leq\omega$.
\end{remark}

\begin{lemma}[{\it cf. }{\cite[Prop.~3.4]{gkl}}]\label{gkl_lemma}
Let $X$ be a separable Banach space and $\e_1,\ldots,\e_n>0$. In order that $\iota_{\e_1}\ldots\iota_{\e_n}B_{X^\ast}\not=\varnothing$ it is necessary that there exists a~weak$^\ast$-null tree $(x_\alpha^\ast)_{\alpha\in S_n}$ on $X^\ast$ of order $n$ such that $\n{x_\alpha^\ast}\geq\frac{1}{4}\e_{\abs{\alpha}}$ for each $\alpha\in S_n$ and $\n{\sum_{\alpha\in\Gamma}x_\alpha^\ast}\leq 1$ for every branch $\Gamma\subset S_n$, and it is sufficient that there exists a~weak$^\ast$-null tree $(x_\alpha^\ast)_{\alpha\in S_n}$ on $X^\ast$ of order $n$ such that $\n{x_\alpha^\ast}\geq\e_{\abs{\alpha}}$ for each $\alpha\in S_n$ and $\n{\sum_{\alpha\in\Gamma}x_\alpha^\ast}\leq 1$ for every branch $\Gamma\subset S_n$. 
\end{lemma}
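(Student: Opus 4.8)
The plan is to use that, $X$ being separable, the weak$^\ast$ topology is metrizable on norm-bounded subsets of $X^\ast$, so every topological condition below can be checked against sequences. Set $K_0=B_{X^\ast}$ and $K_j=\iota_{\e_{n-j+1}}(K_{j-1})$ for $1\leq j\leq n$, so that $K_j=\iota_{\e_{n-j+1}}\cdots\iota_{\e_n}B_{X^\ast}$; thus $K_n=\iota_{\e_1}\cdots\iota_{\e_n}B_{X^\ast}$ is the set in question and $K_n\subseteq\cdots\subseteq K_0$. I would first isolate the elementary fact that drives both implications: if $L\subseteq X^\ast$ is weak$^\ast$-compact and $(z_m^\ast)\subseteq L$ with $z_m^\ast\xrightarrow[]{\,w\ast\,}z^\ast$, then $z^\ast\in L$ and $z^\ast\in\iota_\de L$ whenever $0<\de<\liminf_m\n{z_m^\ast-z^\ast}$ (for a weak$^\ast$-neighbourhood $U\ni z^\ast$ both $z^\ast$ and, eventually, each $z_m^\ast$ lie in $L\cap U$, so $\diam(L\cap U)\geq\liminf_m\n{z_m^\ast-z^\ast}>\de$); and, conversely, if $z^\ast\in\iota_\e L$ then, choosing a decreasing countable base $(U_m)$ of weak$^\ast$-neighbourhoods of $z^\ast$ and points $u^\ast,v^\ast\in L\cap U_m$ with $\n{u^\ast-v^\ast}>\e$, one of the two is at distance $>\e/2$ from $z^\ast$, yielding a sequence $(z_m^\ast)\subseteq L$ with $z_m^\ast\xrightarrow[]{\,w\ast\,}z^\ast$ and $\n{z_m^\ast-z^\ast}>\e/2$.

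For the necessity, I would start from a point $x^\ast\in K_n$ and recursively construct vectors $u_\alpha^\ast$ indexed by $\alpha\in S_n$ together with the empty node (with $u_\varnothing^\ast:=x^\ast$) such that $u_\alpha^\ast\in K_{n-\abs\alpha}$ and, for each non-leaf $\alpha$, its children satisfy $u_{\alpha\frown k}^\ast\xrightarrow[]{\,w\ast\,}u_\alpha^\ast$ and $\n{u_{\alpha\frown k}^\ast-u_\alpha^\ast}>\tfrac12\e_{\abs\alpha+1}$ as $k\to\infty$. This is exactly the converse half of the fact above, applied with $L=K_{n-\abs\alpha-1}$ to $u_\alpha^\ast\in K_{n-\abs\alpha}=\iota_{\e_{\abs\alpha+1}}(K_{n-\abs\alpha-1})$, the produced sequence being relabelled along integers exceeding $\max\alpha$ so the indices fit $S_n$; the recursion halts at leaves, where $u_\alpha^\ast\in K_0=B_{X^\ast}$. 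Then I would put $x_\alpha^\ast:=\tfrac12(u_\alpha^\ast-u_{\alpha^-}^\ast)$, $\alpha^-$ the predecessor of $\alpha$: this is a weak$^\ast$-null tree on $X^\ast$ of order $n$ (each $s$-subsequence is weak$^\ast$-null because $u_{\alpha\frown k}^\ast\xrightarrow[]{\,w\ast\,}u_\alpha^\ast$), it satisfies $\n{x_\alpha^\ast}>\tfrac14\e_{\abs\alpha}$, and along a branch $\Gamma$ ending at a leaf $\ell$ the sum telescopes to $\sum_{\alpha\in\Gamma}x_\alpha^\ast=\tfrac12(u_\ell^\ast-x^\ast)$, of norm $\leq\tfrac12(\n{u_\ell^\ast}+\n{x^\ast})\leq 1$.

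For the sufficiency, given a tree $(x_\alpha^\ast)_{\alpha\in S_n}$ as in the statement, I would attach to each $\alpha$ the partial sum $S_\alpha:=\sum_{\beta\leq\alpha}x_\beta^\ast$ (over the initial segments of $\alpha$ lying in $S_n$) and set $S_\varnothing:=0$, and then prove by induction on $j=0,1,\ldots,n$ that $S_\alpha\in K_j$ for every $\alpha$ with $\abs\alpha=n-j$ --- so that the case $j=n$, $\alpha=\varnothing$, gives $0\in K_n$. The base case $j=0$ is immediate, since a branch sum has norm $\leq 1$. For the step, with $\abs\delta=n-j-1$, the children of $\delta$ give an $s$-subsequence $(x_{\delta\frown k_i}^\ast)_i$ which is weak$^\ast$-null, while $S_{\delta\frown k_i}=S_\delta+x_{\delta\frown k_i}^\ast\in K_j$ by the inductive hypothesis; hence $S_{\delta\frown k_i}\xrightarrow[]{\,w\ast\,}S_\delta$ with $\n{S_{\delta\frown k_i}-S_\delta}\geq\e_{n-j}$, and the direct half of the fact places $S_\delta$ in $\iota_\de(K_j)$ for every $\de<\e_{n-j}$, i.e.\ --- after replacing $\e_{n-j}$ by a marginally smaller value, which changes nothing in what follows --- in $\iota_{\e_{n-j}}(K_j)=K_{j+1}$.

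The step I expect to be the main obstacle is exactly this last point: reconciling the strict inequality $\diam(K\cap U)>\e$ in the definition of $\iota_\e K$ with the non-strict norm bounds that the tree provides. The standard way around it --- and the reason the constants in the statement are asymmetric ($\tfrac14$ on one side, $1$ on the other, as in \cite{gkl}) --- is to carry out the derivation arguments with every parameter taken strictly below the nominal $\e_i$, losing an arbitrarily small amount that is irrelevant for the subsequent power-type estimates. Everything else is routine: the passage between nets and sequences is licensed by weak$^\ast$-metrizability, the telescoping keeps branch sums under control, and matching the tree indices to $S_n$ is a harmless relabelling.
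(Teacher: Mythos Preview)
Your argument is correct and is precisely the standard proof; the paper itself does not prove this lemma but simply cites it from \cite[Prop.~3.4]{gkl}. Your recursive extraction of weak$^\ast$-convergent sequences (with the factor $\tfrac12$ in $x_\alpha^\ast$ to keep branch sums in $B_{X^\ast}$) for necessity, the reverse induction on partial sums for sufficiency, and your identification of the strict-versus-non-strict diameter inequality as the only delicate point are exactly how this result is handled in the literature.
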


Let us now recall some terminology concerning asymptotic structures. For a~separable Banach space $X$ we denote by $\mathsf{cof}(X)$ the family of all finite codimensional subspaces of $X$ and consider the following game between Players I~and II:
$$
\begin{array}{l}
\mbox{Player I chooses }Y_1\in\mathsf{cof}(X)\\
\mbox{Player II chooses }y_1\in S_{Y_1}\\
\mbox{Player I chooses }Y_2\in\mathsf{cof}(X)\\
\mbox{Player II chooses }y_2\in S_{Y_2}\\
\ldots
\end{array}
$$
Given $n\in\N$ and $\mathcal{A}\subseteq S_X^n$ we say that {\it Player~II has a~winning strategy in the }$\mathcal{A}$-{\it game} if he can always end up with $(y_j)_{j=1}^n\in\mathcal{A}$ after $n$ steps, no matter what subspaces $Y_j$'s were picked by Player~I. Let $\mathcal{M}_n$ be the collection of all normalized monotone basic sequences of length $n$; then $(\mathcal{M}_n,\log d_{\mathsf{BM}})$ is a~compact metric space, where $d_{\mathsf{BM}}$ stands for the Banach--Mazur distance (we identify all sequences which are $1$-equivalent).
\begin{definition}[{{\it cf. }\cite{MTJ}}]
Let $X$ be a~Banach space and $n\in\N$. We say that a~sequence $(e_j)_{j=1}^n\in\mathcal{M}_n$ is an~{\it element of the} $n^{th}$ {\it asymptotic structure of} $X$, and then we write $(e_j)_{j=1}^n\in\{X\}_{n}$, provided that
\begin{equation*}
\begin{split}
\scalebox{1.1}{$\forall$}\,\e>0\,\,&\scalebox{1.1}{$\forall$}\, Y_1\in\mathsf{cof}(X)\,\,\scalebox{1.1}{$\exists$}\, y_1\in S_{Y_1}\,\ldots\,\scalebox{1.1}{$\forall$}\,Y_n\in\mathsf{cof}(X)\,\,\scalebox{1.1}{$\exists$}\,y_n\in S_{Y_n}\\
&d_{\mathsf{BM}}((y_j)_{j=1}^n, (e_j)_{j=1}^n)<1+\e.
\end{split}
\end{equation*}
In other words, $(e_j)_{j=1}^n\in\{X\}_{n}$ if and only if for every $\de>0$ Player~II has a~winning strategy in the $\mathcal{A}_\de$-game, where $\mathcal{A}_\de$ is the~ball in $\mathcal{M}_n$ with center $(e_j)_{j=1}^n$ and radius $\de$.
\end{definition}

In the case where $X^\ast$ is separable this property can be restated in terms of trees ({\it cf. }\cite[Cor.~5.2]{OS_trees}). Namely, $\{X\}_{n}$ is the minimal closed subset of $\mathcal{M}_n$ such that for any $\e>0$ every weakly null tree on $S_X$ of order $n$ has a~node $(y_j)_{j=1}^n$ with $d_{\mathsf{BM}}((y_j)_{j=1}^n,\{X\}_n)<1+\e$. Therefore, Theorem~\ref{AJO_theorem} and a~simple prunning argument guarantee that for every separable Banach space $X$ with $\Sz(X)\leq\omega$ and any $\ro\in (0,1)$ there is some uniform bound on the lengths of $\ell_1^+$-$\ro$-sequences lying in an~asymptotic structure of $X$.

We say that $X$ satisfies {\it subsequential} $\ell_q$-{\it upper tree estimates} if there exists a~constant $C>0$ so that every weakly null tree on $S_X$ contains a~branch $(x_n)$ which for every finitely supported sequence of scalars $(a_n)$ satisfies $\n{\sum_n a_nx_n}\leq C(\sum_{n}\abs{a_n}^q)^{1/q}$. The following assertion is a~part of a~theorem due to Odell and Schlumprecht.
\begin{theorem}[{{\it cf. }\cite[Thm. 3]{OS}}]\label{OS_theorem}
Let $X$ be a Banach space with $X^\ast$ separable. Then the following assertions are equivalent:
\begin{itemize*}
\item[{\rm (i)}] $\Sz(X)\leq\omega$.
\item[{\rm (ii)}] There exist $q>1$ and $K<\infty$ so that for all $n\in\N$, $(e_i)_{i=1}^n\in\bigl\{X\bigr\}_{\! n}$ and $(a_i)_{i=1}^n\subset\R$ we have
$$
\Biggl\|\sum_{i=1}^na_ie_i\Biggr\|\leq K\Biggl(\sum_{i=1}^n\abs{a_i}^q\Biggr)^{\!\! 1/q}.
$$
\item[{\rm (iii)}] There exists $\overline{q}>1$ so that $X$ satisfies subsequential $\ell_{\overline{q}}$-upper tree estimates. In fact, one can take any $\overline{q}\in (1,q)$ with $q$ satisfying assertion {\rm (ii)} above.
\end{itemize*}
\end{theorem}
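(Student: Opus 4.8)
The plan is to establish the cycle ${\rm(i)}\Rightarrow{\rm(ii)}\Rightarrow{\rm(iii)}\Rightarrow{\rm(i)}$, together with the refinement that any $\overline q\in(1,q)$ works in ${\rm(iii)}$ once $q$ works in ${\rm(ii)}$. I would open with ${\rm(iii)}\Rightarrow{\rm(i)}$, which is the softest and needs only the machinery of this section. Argue by contraposition: if $\Sz(X)>\omega$ then, since $\Sz(X)\leq\omega$ amounts to $\Sz(X,\e)$ being finite for every $\e$, there is $\e\in(0,1)$ with $\iota_\e^nB_{X^\ast}\not=\varnothing$ for all $n\in\N$. By Theorem~\ref{AJO_theorem} --- whose first assertion is local in $n$ and, as the Remark after it indicates, does not really require $\Sz(X)\leq\omega$ --- there is for each $n$ an $\ell_1^+$-$\tfrac1{16}\e$-weakly null tree $\mathcal T_n$ on $S_X$ of order $n$. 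Applying ${\rm(iii)}$ (to $\mathcal T_n$ itself, or to an arbitrary weakly null extension of it to an infinite tree) produces a branch whose first $n$ vectors $x_1,\ldots,x_n$ obey $\n{\sum_{j=1}^n x_j}\leq Cn^{1/\overline q}$; but $x_1,\ldots,x_n$ form a node of $\mathcal T_n$, hence an $\ell_1^+$-$\tfrac1{16}\e$-sequence, so $\tfrac1{16}\e\,n\leq Cn^{1/\overline q}$ --- impossible once $n>(16C/\e)^{\overline q/(\overline q-1)}$. Thus $\iota_\e^nB_{X^\ast}=\varnothing$ for such $n$, which gives $\Sz(X)\leq\omega$ and, letting $\overline q\to q^-$, even $\mathsf p(X)\leq q/(q-1)$.

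Next I would do ${\rm(ii)}\Rightarrow{\rm(iii)}$ using the tree description of the asymptotic structure. By a pruning argument, every weakly null tree on $S_X$ of order $n$ admits a full subtree all of whose nodes are $(1+\delta)$-equivalent to elements of $\{X\}_n$ and therefore satisfy an $\ell_q$ upper estimate with constant $\approx K$; running this simultaneously for all $n$ with $\delta=\delta_n\downarrow0$ fast and diagonalising yields one infinite branch along which every finite segment obeys an $\ell_q$ estimate with uniformly bounded constant, and the standard Odell--Schlumprecht blocking turns these level-by-level estimates into a single subsequential $\ell_{\overline q}$ upper tree estimate --- it is precisely here that one must pass to $\overline q<q$ to absorb the perturbation constants. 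I would also record the one-line fact that ${\rm(ii)}\Rightarrow{\rm(i)}$ directly, in the spirit of the previous paragraph: an $\ell_1^+$-$\tfrac1{16}\e$-weakly null tree of order $n$ has a node $(1+\delta)$-equivalent to some $(e_j)_{j=1}^n\in\{X\}_n$, and with all coefficients equal to $1$ property ${\rm(ii)}$ would force $\tfrac1{16}\e\,n/(1+\delta)\leq\n{\sum_{j=1}^n e_j}\leq Kn^{1/q}$, impossible for large $n$.

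The hard part will be ${\rm(i)}\Rightarrow{\rm(ii)}$. Here I would first note that submultiplicativity of $(0,1)\ni\e\mapsto\Sz(X,\e)$ together with finiteness of $\Sz(X,\e_0)$ for a single $\e_0<1$ (automatic for infinite-dimensional $X$, the finite-dimensional case being trivial) gives constants $a>0$, $C\geq1$ with $\Sz(X,\e)\leq C\e^{-a}$ for all $\e\in(0,1)$, so that $\mathsf p(X)\leq a$; the goal is to convert this rate of decay of the Szlenk derivations into a uniform $\ell_q$ estimate on $\{X\}_n$ with $q$ slightly below the conjugate of $a$. The cleanest route is to invoke the Knaust--Odell--Schlumprecht and Raja renorming theorems recalled in the introduction: $\Sz(X)\leq\omega$ gives $X$ an equivalent asymptotically uniformly smooth norm of power type, say $\overline{\rho}_X(t)\leq C't^q$, whereupon a standard iteration of the defining inequality of $\overline{\rho}_X$ along the nodes of weakly null trees (equivalently, playing the asymptotic game) delivers the $\ell_q$ estimate on $\{X\}_n$. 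A more self-contained route stays within this section: via Theorem~\ref{AJO_theorem} (used in both directions) and the tree description of $\{X\}_n$ one shows that the length of an $\ell_1^+$-$\ro$-sequence lying in an asymptotic structure of $X$ is controlled by $\Sz(X,c\ro)$ for a universal $c$, hence by $C(c\ro)^{-a}$, and then --- using the stability of the asymptotic structure under sub-blocking --- upgrades this quantitative absence of long $\ell_1^+$-blocks to the desired $\ell_q$ estimate. I expect the delicate point to be exactly this upgrade: tracking the exponent and the uniformity in $n$ while passing from ``no long $\ell_1^+$-blocks, quantitatively'' to a genuine $\ell_q$ bound. Either one borrows it from the cited renorming results, or one reconstructs it along the lines of the original Odell--Schlumprecht argument.
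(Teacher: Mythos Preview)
The paper does not prove this theorem: it is quoted from \cite[Thm.~3]{OS} and used as a black box, so there is no ``paper's own proof'' to compare your proposal against. Your sketch is a reasonable outline of the Odell--Schlumprecht argument, and the easy implications $({\rm iii})\Rightarrow({\rm i})$ and $({\rm ii})\Rightarrow({\rm i})$ are essentially correct as written (note that the Remark after Theorem~\ref{AJO_theorem} is exactly what licenses dropping the hypothesis $\Sz(X)\leq\omega$ when $X^\ast$ is separable).

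It is worth pointing out, however, that the paper does \emph{not} treat $({\rm i})\Rightarrow({\rm ii})$ as a pure black box: Lemmas~\ref{method1}--\ref{method2} and Proposition~\ref{P2} carry out precisely the ``more self-contained route'' you describe in your last paragraph, and in fact sharpen $({\rm i})\Rightarrow({\rm ii})$ to the optimal exponent (any $q<\mathsf p(X)^\prime$). The mechanism is exactly the one you anticipate as ``the delicate point'': Lemma~\ref{method2} is a quantitative Johnson-type blocking argument (with the James $\ell_1^+$ trick, Lemma~\ref{james_trick}) that upgrades ``no long $\ell_1^+$-$\ro$-blocks'' to a genuine $\ell_q$ upper estimate while tracking the exponent, and Lemma~\ref{method1} feeds in the bound $N(\ro)\leq\Sz(X,\e)$ coming from Theorem~\ref{AJO_theorem}. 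So rather than proving Theorem~\ref{OS_theorem} and then refining, the paper bypasses the qualitative statement and goes straight for the quantitative one; your second route for $({\rm i})\Rightarrow({\rm ii})$ is essentially a coarse preview of that.
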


\section{The separable case}

The following result is a~`power type' analogue to \cite[Cor.~4.5]{causey}.
\begin{proposition}\label{Prop1}
Let $V$ be a Banach space with a~normalized $1$-unconditional basis $(v_n)$ and assume that $\Sz(V)\leq\omega$. If $X$ is a~Banach space with a~shrinking FDD satisfying subsequential $V$-upper block estimates with respect to $(v_n)$, then $\mathsf{p}(X)\leq\mathsf{p}(V)$. 
\end{proposition}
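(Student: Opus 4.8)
The plan is to relate the Szlenk power type to the asymptotic behavior of $\Sz(X,\e)$ via the $\ell_1^+$-weakly null trees from Theorem~\ref{AJO_theorem}, and then transfer such trees from $X$ to $V$ by means of the block estimates. Concretely, fix $\e\in(0,1)$ and suppose $n=\Sz(X,\e)-1$, so that $\iota_\e^n B_{X^\ast}\neq\varnothing$. By the first bullet of Theorem~\ref{AJO_theorem} there is an $\ell_1^+$-$\tfrac{1}{16}\e$-weakly null tree $(x_\alpha)_{\alpha\in S_n}$ on $S_X$ of order $n$. The point of the shrinking FDD hypothesis is that, after a standard pruning/perturbation argument, I may assume each $s$-subsequence of this tree is (close to) a normalized block sequence with respect to the FDD $\mathsf{E}$: weak nullity lets one push the supports of successive nodes arbitrarily far to the right, and shrinkingness guarantees that a weakly null sequence in $X$ can be perturbed to a genuine block sequence (this is the classical Bessaga--Pe\l czy\'nski-type pruning, done here inside a tree, along the lines of the tree-pruning used in \cite{OS}, \cite{causey}).

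Once the tree $(x_\alpha)$ is (essentially) a block tree, I feed each branch into the subsequential $C$-$V$-upper block estimate: for a branch $(x_{\alpha_1},\dots,x_{\alpha_n})$ with $m_j=\min\supp_{\mathsf{E}}x_{\alpha_j}$ (which along a branch form an increasing sequence in $\N$, after pruning we may also assume $m_j$ can be taken as large as we like and in particular the $m_j$ are all distinct across the whole tree in the obvious branch-compatible way), the $\ell_1^+$-$\tfrac{1}{16}\e$-lower estimate
\[
\frac{\e}{16}\sum_{j=1}^n a_j \;\leq\; \Biggl\|\sum_{j=1}^n a_j x_{\alpha_j}\Biggr\| \;\leq\; C\Biggl\|\sum_{j=1}^n a_j v_{m_j}\Biggr\|
\]
holds for all $a_j\geq 0$. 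Thus the images $(v_{m_j})$ along branches form an $\ell_1^+$-$\tfrac{\e}{16C}$-sequence in $V$; organizing these over the whole tree (and using $1$-unconditionality of $(v_n)$ together with the fact that for a normalized $1$-unconditional basis the canonical basis vectors form a weakly null sequence once $\Sz(V)\leq\omega$, so $c_0\not\subset V$ and the basis is shrinking — hence weakly null) produces an $\ell_1^+$-$\tfrac{\e}{16C}$-weakly null tree on $S_V$ of order $n$. Now the second bullet of Theorem~\ref{AJO_theorem}, applied in $V$, gives $\iota_\de^n B_{V^\ast}\neq\varnothing$ for every $0<\de<\tfrac{\e}{16C}$, i.e.
\[
\Sz\Bigl(V,\tfrac{\e}{16C}\Bigr)\;\geq\; n+1 \;=\;\Sz(X,\e).
\]

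Taking logarithms and dividing by $\abs{\log\e}$, then letting $\e\to 0^+$, the constant $16C$ contributes only $o(1)$ to $\tfrac{\log\Sz(V,\e/16C)}{\abs{\log(\e/16C)}}$ while $\abs{\log(\e/16C)}/\abs{\log\e}\to 1$, so the inequality $\Sz(X,\e)\leq \Sz(V,\e/16C)$ passes to the limit and yields $\mathsf{p}(X)\leq\mathsf{p}(V)$. The main obstacle is the pruning step: one must ensure that a single pruning simultaneously (i) makes every $s$-subsequence a block sequence up to small perturbation, (ii) controls the loss in the $\ell_1^+$-constant incurred by the perturbation (which is why one starts from the cleaner constant $\tfrac{1}{16}\e$ and is content to end with $\tfrac{\e}{16C}$ after absorbing perturbation errors into, say, $\tfrac{\e}{32}$ before applying the block estimate), and (iii) preserves weak nullity and the tree order $n$. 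All of this is routine but delicate bookkeeping; everything downstream is just the two directions of Theorem~\ref{AJO_theorem} glued together by the block estimate and a logarithmic limit.
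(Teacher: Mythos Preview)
Your approach is essentially identical to the paper's: obtain an $\ell_1^+$-$\tfrac{1}{16}\e$-weakly null tree in $S_X$ via Theorem~\ref{AJO_theorem}, prune so that $s$-subsequences and branches are block sequences with respect to $\mathsf{E}$, push the tree to $S_V$ via $x_\alpha\mapsto v_{\min\supp_{\mathsf{E}}x_\alpha}$, and apply Theorem~\ref{AJO_theorem} in $V$ to compare $\Sz(X,\e)$ with $\Sz(V,\de)$ for $\de<\e/(16C)$. One small slip: your justification that $(v_n)$ is shrinking is garbled---$\Sz(V)\leq\omega$ certainly does \emph{not} exclude $c_0$ (indeed $\Sz(c_0)=\omega$); the correct argument is that $\Sz(V)\leq\omega$ forces $V^\ast$ separable (equivalently, $\ell_1\not\hookrightarrow V$), and an unconditional basis in a space with separable dual is shrinking.
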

\begin{proof}
Suppose $\e\in(0,1)$ and $n\in\N$ are such that  $\iota_\e^nB_{X^\ast}\not=\varnothing$. By Theorem~\ref{AJO_theorem}, there exists an~$\ell_1^+$-$\frac{1}{16}\e$-weakly null tree $\mathcal{F}=(x_\alpha)_{\alpha\in S_n}$ on $S_X$ of order $n$. By slightly decreasing the value of $\frac{1}{16}\e$ and using an~easy prunning procedure we may assume that every $s$-subsequence of $\mathcal{F}$ and every branch in $\mathcal{F}$ forms a~block sequence with respect to the given FDD $\mathsf{E}$ of $X$.

Now, we define a~new tree $\mathcal{V}=(w_\alpha)_{\alpha\in S_n}$ on $S_V$ by setting
$$
w_\alpha=v_{N(\alpha)},\quad\mbox{where }\, N(\alpha)\coloneqq\min\,\mathrm{supp}_{\mathsf{E}}x_\alpha\quad (\alpha\in S_n).
$$
If $C\geq 1$ is so that $\mathsf{E}$ satisfies subsequential $C$-$V$-upper block estimates, then every node of $\mathcal{V}$ is an~$\ell_1^+$-$C^{-1}\!\ro$-sequence in $V$, where $\ro$ can be any prescribed positive number smaller than $\frac{1}{16}\e$. For each $\alpha$ with $0\leq \abs{\alpha}<n$ we obviously have $N(\alpha\!\frown\! k)\to\infty$ as $k\to\infty$ and since the basis $(v_n)$ is shrinking, we infer that every $s$-subsequence of $\mathcal{V}$ is weakly null. Therefore, $\mathcal{V}$ is an~$\ell_1^+$-$C^{-1}\!\ro$-weakly null tree in $S_V$ and by appealing to Theorem~\ref{AJO_theorem} once again, we obtain $\iota_\de^n B_{V^\ast}\not=\varnothing$ for every $0<\de<C^{-1}\ro$. Hence, 
$\Sz(X,\e)\leq\Sz(V,\de)$ for every $0<\de<\e/(16C)$, which completes the proof.
\end{proof}

For any $p\in [1,\infty)$ we denote by $p^\prime$ the conjugate exponent, {\it i.e.} $p^\prime=p/(p-1)$ if $p>1$ and $p^\prime=\infty$ if $p=1$. Our next goal is to show that condition (ii) of Theorem~\ref{OS_theorem} holds true with any $q<\mathsf{p}(X)^\prime$. To this end we need to derive a~slightly more delicate quantitative version of Johnson's result \cite[Lemma~III.1]{johnson} which originally says what follows: For any unconditionally monotone basic sequence $(e_i)$ and every $n\in\N$ there exists $p>1$ (namely, any $p$ with $2n^{1/p}<3$) such that if $(e_i)$ does not admit any normalized block subsequence $10$-equivalent to the unit vector basis of $\ell_1^n$, then it satisfies subsequential $3$-$\ell_q$-upper block estimates with $q=p^\prime$. 

To formalize our result it is convenient to introduce the following terminology. Let $a$ be a parameter running through some set $A$ and let $\Phi_a\colon\N\times (c_a,\infty)\to (0,\infty)$, where $c_a\geq 1$, and $\ro_a\colon (c_a,\infty)\to(0,1]$ ($a\in A$). We say that $\{(\Phi_a,\ro_a)\}_{a\in A}$ is an $\ell_1^+$-{\it method} provided that for all $a\in A$, $n\in\N$, $C>c_a$, any Banach space $X$ and every normalized monotone basic sequence $(e_i)$ in $X$ the following condition is satisfied:
\begin{itemize}[leftmargin=\parindent+6pt]
\item[(J)] if $(e_i)$ does not admit any block $\ell_1^+$-$\ro_a(C)$-sequence of length $n$, then for every exponent $p>1$ so that $p\geq\Phi_a(n,C)$ it satisfies subsequential $C$-$\ell_q$-upper block estimates with $q=p^\prime$.
\end{itemize}

\begin{lemma}\label{method1}
Suppose there exists an $\ell_1^+$-method $\{(\Phi_a,\ro_a)\}_{a\in A}$ such that:
\begin{itemize*}
\item[{\rm (a)}] for all $\eta>0$, $p\geq 1$ there exist $\de>0$, $a\in A$ such that $\Phi_a(\ro_a(C)^{-p-\de},C)\leq p+\eta$ whenever $C$ is sufficiently large;
\item[{\rm (b)}] $\lim_{C\to\infty}\ro_a(C)=0$ for each $a\in A$.
\end{itemize*}
Then for every separable Banach space $X$ with $\Sz(X)\leq\omega$ and every $q<\mathsf{p}(X)^\prime$ there exists $K_q<\infty$ so that for all $n\in\N$, $(e_i)_{i=1}^n\in\bigl\{X\bigr\}_{\! n}$ and $(a_i)_{i=1}^n\subset\R$ we have
$$
\Biggl\|\sum_{i=1}^na_ie_i\Biggr\|\leq K_q\Biggl(\sum_{i=1}^n\abs{a_i}^q\Biggr)^{\!\! 1/q}.
$$
\end{lemma}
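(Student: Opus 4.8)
The plan is to combine the hypothesised $\ell_1^+$-method with the tree machinery of Theorem~\ref{AJO_theorem} and the asymptotic-structure description recalled after it, so that a good upper block estimate on block sequences yields the desired upper estimate on elements of $\{X\}_n$. First I would fix $q<\mathsf p(X)'$ and choose $p>1$ with $q=p'$ and $p>\mathsf p(X)$; pick a small $\eta>0$ so that still $p-\eta>\mathsf p(X)$, and apply hypothesis (a) to get $\delta>0$ and $a\in A$ with $\Phi_a(\ro_a(C)^{-(p-\eta)-\delta},C)\le (p-\eta)+\eta=p$ for all large $C$. The point of this bookkeeping is that the exponent $\ro_a(C)^{-(p-\eta)-\delta}$ will turn out to dominate the maximal length of a block $\ell_1^+$-$\ro_a(C)$-sequence inside an asymptotic structure of $X$, once $C$ is large; then condition (J) fires with the exponent $p$ and gives subsequential $C$-$\ell_q$-upper block estimates for the relevant finite block sequences.

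Concretely, the second step is to bound, for a given $\ro\in(0,1)$, the length of an $\ell_1^+$-$\ro$-sequence that can occur as an element of $\{X\}_m$. By the tree reformulation of $\{X\}_m$ (the minimal closed set met within $1+\e$ by a node of every weakly null tree on $S_X$ of order $m$), an $\ell_1^+$-$\ro$-sequence in $\{X\}_m$ produces, after a small perturbation, an $\ell_1^+$-$\ro'$-sequence as a node of such a tree for $\ro'$ slightly less than $\ro$; a standard pruning then yields an $\ell_1^+$-$\ro'$-\emph{weakly null} tree on $S_X$ of order $m$, so by Theorem~\ref{AJO_theorem} we get $\iota_\delta^m B_{X^\ast}\neq\varnothing$ for $\delta<\ro'$, i.e. $\Sz(X,\delta)>m$. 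Hence $m<\Sz(X,\delta)$, and by the definition of the Szlenk power type, for $\delta$ small this forces $m\le \delta^{-\mathsf p(X)-o(1)}$. Taking $\ro=\ro_a(C)$ and $\delta$ just below it, and using hypothesis (b) that $\ro_a(C)\to 0$ as $C\to\infty$, we conclude that for $C$ large every $\ell_1^+$-$\ro_a(C)$-sequence in any $\{X\}_m$ has length strictly less than $\ro_a(C)^{-(p-\eta)-\delta}$.

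The third step assembles these. Fix $n\in\N$, $(e_i)_{i=1}^n\in\{X\}_n$ and scalars $(a_i)$. It suffices to estimate $\|\sum a_i e_i\|$; since $(e_i)_{i=1}^n$ is $1+\e$-approximated by a block sequence realised deep in the FDD (via the game, or via the tree description and pruning), it is enough to get a uniform $C$-$\ell_q$-upper block estimate with $C$ independent of $n$. Apply the previous step with $C$ a fixed large constant as chosen in step one: every block $\ell_1^+$-$\ro_a(C)$-sequence appearing in $\{X\}_m$ has length $<\ro_a(C)^{-(p-\eta)-\delta}=:n_0$, so in particular the relevant block sequences admit no block $\ell_1^+$-$\ro_a(C)$-sequence of length $n_0$; since $\Phi_a(n_0,C)\le p$, condition (J) gives subsequential $C$-$\ell_q$-upper block estimates with this fixed $C$. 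Pulling back through the $1+\e$ approximation and letting $\e\to0$ yields $\|\sum a_i e_i\|\le K_q(\sum|a_i|^q)^{1/q}$ with $K_q$ depending only on $C$ (hence only on $q$), as required.

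The main obstacle I expect is the passage from ``$\ell_1^+$-$\ro$-sequence in $\{X\}_m$'' to ``$\ell_1^+$-$\ro$-sequence as a node of a weakly null tree on $S_X$ on which (J) can be applied'', and in particular making the pruning compatible with the FDD block structure while keeping the $\ell_1^+$ constant essentially unchanged: one must lose only an arbitrarily small amount in $\ro$ at each such step, and must arrange that the node giving the $\ell_1^+$-$\ro$-sequence is itself a block sequence so that (J) applies to it. The quantitative matching of exponents in step one (choosing $\eta,\delta$ and then $C$ large in the right order) is delicate but routine once the above structural reduction is in place.
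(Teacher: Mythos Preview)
Your approach is essentially the paper's: bound the length of $\ell_1^+$-$\ro$-sequences in $\{X\}_m$ via Theorem~\ref{AJO_theorem} and the Szlenk power type, then feed that bound into condition~(J) with a carefully chosen~$C$. Your bookkeeping in step one (using $p-\eta$ in place of the paper's $\mathsf{p}(X)$) is an equivalent repackaging.

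The one unnecessary complication is your third step and the ``main obstacle'' you flag. There is no FDD in sight---$X$ is an arbitrary separable space with $\Sz(X)\leq\omega$---and none is needed: condition~(J) is stated for an \emph{abstract} normalized monotone basic sequence, so it applies directly to $(e_i)_{i=1}^n\in\{X\}_n$ itself. The only thing to check is that $(e_i)$ admits no block $\ell_1^+$-$\ro_a(C)$-subsequence of length $n_0$, and this follows because any normalized block subsequence of an element of $\{X\}_n$ lies again in $\{X\}_m$ for the appropriate $m\leq n$ (Player~II simulates the $n$-round game inside the $m$-round game and outputs the blocks), so your step-two bound applies to it. Once you see this, the approximation and pull-back disappear, and so does the obstacle.
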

\begin{proof}
By the remarks following the definition of asymptotic structures, there exists a~function $N\colon (0,1)\to\N$ so that if $(e_i)_{i=1}^n\in\{X\}_{n}$ is an~$\ell_1^+$-$\ro$-sequence, then $n<N(\ro)$. Moreover, Theorem~\ref{AJO_theorem} yields that $N(\ro)\leq \Sz(X,\e)$ for every $0<\e<\ro$. Fix any $q<\mathsf{p}(X)^\prime$ and put $\eta=q^\prime-\mathsf{p}(X)>0$. In view of condition (a), there exist $\de>0$ and $a\in A$ so that for sufficiently large $C$'s we have
\begin{equation}\label{Clarge}
\Phi_a(\ro_a(C)^{-\mathsf{p}(X)-\de},C)\leq \mathsf{p}(X)+\eta=q^\prime.
\end{equation}
Plainly, we have $\Sz(X,\e)\leq\e^{-\mathsf{p}(X)-\de}$ if $\e$ is sufficiently small (just by the definition of $\mathsf{p}(X)$). Hence, with the aid of condition (b) and taking $C$ sufficiently large, we may guarantee that the last inequality holds true for every $\e\leq\ro_a(C)$; we may also assume that \eqref{Clarge} is valid for our choice of $C$. Consequently, 
$$
N(\ro_a(C))\leq\ro_a(C)^{-\mathsf{p}(X)-\de}
$$
which means that there are no $\ell_1^+$-$\ro_a(C)$-sequences of length $\ro_a(C)^{-\mathsf{p}(X)-\de}$ in the corresponding asymptotic structure of $X$. Therefore, condition (J) implies that all members of any asymptotic structure of $X$ satisfy subsequential $C$-$\ell_q$-upper block estimates.
\end{proof}

In order to show that a~suitable $\ell_1^+$-method exists, we shall need an~`$\ell_1^+$-version' of the well-known James' blocking argument used in the proof of his $\ell_1$-distortion theorem (see, {\it e.g.}, \cite[Prop.~2]{OS_handbook}). The original argument applies {\it mutatis mutandis} to our situation, so we omit the proof.
\begin{lemma}\label{james_trick}
Let $N,k\in\N$, $\ro>0$ and suppose that $(x_i)_{i=1}^{N^k}$ is a~normalized $\ell_1^+$-$\ro$-sequence in some Banach space. Then $(x_i)_{i=1}^{N^k}$ admits a~normalized block subsequence of length $N$ which forms an~$\ell_1^+$-$\ro^{1/k}$-sequence.
\end{lemma}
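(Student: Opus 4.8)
The plan is to reconstruct James' blocking argument (cf.\ the proof of \cite[Prop.~2]{OS_handbook}) in the $\ell_1^+$-setting, keeping track of the exponents explicitly. Let $X$ denote the ambient Banach space. For a set $I\subseteq\{1,\ldots,N^k\}$ of consecutive integers I would put
$$
\ro(I)=\inf\Biggl\{\Biggl\|\sum_{i\in I}a_ix_i\Biggr\|\colon a_i\geq 0,\ \sum_{i\in I}a_i=1\Biggr\},
$$
the largest constant $c$ for which $\n{\sum_{i\in I}a_ix_i}\geq c\sum_{i\in I}a_i$ holds for all $a_i\geq 0$. Since $I$ is finite these infima are attained; they are positive because $\ro(I)\geq\ro(\{1,\ldots,N^k\})\geq\ro$ by hypothesis, while $\ro(\{i\})=1$ (the $x_i$ are normalized) and $\ro(J)\geq\ro(I)$ whenever $J\subseteq I$ (the infimum defining $\ro(I)$ runs over more configurations).

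The first step is a blocking inequality. Suppose $I=I_1\cup\cdots\cup I_N$ is a partition of an interval into consecutive subintervals. For each $l$, choose by compactness scalars $(a_i)_{i\in I_l}\subset[0,\infty)$ with $\sum_{i\in I_l}a_i=1$ and $\n{\sum_{i\in I_l}a_ix_i}=\ro(I_l)$, and set $y_l=\ro(I_l)^{-1}\sum_{i\in I_l}a_ix_i$; then $\n{y_l}=1$, $y_l$ is supported in $I_l$, and its (nonnegative) coefficients sum to $\ro(I_l)^{-1}$. For any $b_1,\ldots,b_N\geq 0$ we have $\sum_l b_ly_l=\sum_{i\in I}c_ix_i$ with $c_i\geq 0$ and $\sum_{i\in I}c_i=\sum_l b_l\ro(I_l)^{-1}\geq(\max_l\ro(I_l))^{-1}\sum_l b_l$, whence
$$
\Biggl\|\sum_{l=1}^Nb_ly_l\Biggr\|\geq\ro(I)\sum_{i\in I}c_i\geq\frac{\ro(I)}{\max_{1\leq l\leq N}\ro(I_l)}\sum_{l=1}^Nb_l.
$$
So $(y_l)_{l=1}^N$ is a normalized block subsequence of $(x_i)_{i=1}^{N^k}$ of length $N$ forming an $\ell_1^+$-$\bigl(\ro(I)/\max_l\ro(I_l)\bigr)$-sequence, and it remains to choose $I$ and its partition so that this ratio is at least $\ro^{1/k}$.

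For that I would use a scaling argument. Put $t_j=\max\{\ro(I)\colon I\text{ an interval of length }N^j\}$ for $j=0,1,\ldots,k$; by the monotonicity of $\ro$ (every length-$N^j$ interval contains a length-$N^{j-1}$ subinterval with larger $\ro$) one gets $1=t_0\geq t_1\geq\cdots\geq t_k\geq\ro$. Since $\prod_{j=1}^k(t_j/t_{j-1})=t_k\geq\ro=(\ro^{1/k})^k$ and each factor lies in $(0,1]$, some index $j^\ast\in\{1,\ldots,k\}$ satisfies $t_{j^\ast}/t_{j^\ast-1}\geq\ro^{1/k}$. Choosing an interval $I\subseteq\{1,\ldots,N^k\}$ of length $N^{j^\ast}$ with $\ro(I)=t_{j^\ast}$ and splitting it into $N$ consecutive intervals of length $N^{j^\ast-1}$ (each with $\ro(I_l)\leq t_{j^\ast-1}$), the first step produces an $\ell_1^+$-$\bigl(t_{j^\ast}/t_{j^\ast-1}\bigr)$-sequence, hence an $\ell_1^+$-$\ro^{1/k}$-sequence of length $N$, as desired.

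I do not anticipate a genuine obstacle; the content is entirely in the blocking inequality, where the improved constant of the length-$N$ block sequence is produced precisely by the ratio $t_{j^\ast}/t_{j^\ast-1}$ between two consecutive scales $N^{j^\ast-1}$ and $N^{j^\ast}$. The only point demanding care is the scale bookkeeping — that the telescoping product runs over exactly $k$ factors, from the singleton scale ($t_0=1$) down to the full scale ($t_k\geq\ro$), and that the chosen interval of length $N^{j^\ast}$ fits inside $\{1,\ldots,N^k\}$ and genuinely decomposes into $N$ blocks of length $N^{j^\ast-1}$.
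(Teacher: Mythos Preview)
Your argument is correct and is precisely the James blocking argument the paper has in mind; the paper omits the proof entirely, remarking only that ``the original argument applies \textit{mutatis mutandis}'' and citing \cite[Prop.~2]{OS_handbook}. Your reconstruction tracks the exponents carefully and the telescoping step $\prod_{j=1}^k(t_j/t_{j-1})=t_k\geq\ro$ is exactly what forces some ratio $t_{j^\ast}/t_{j^\ast-1}\geq\ro^{1/k}$, so there is nothing to add.
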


\begin{lemma}\label{method2}
There exists an $\ell_1^+$-method $\{(\Phi_a,\ro_a)\}_{a>1}$ satisfying conditions {\rm (a)} and {\rm (b)}.
\end{lemma}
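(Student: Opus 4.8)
The plan is to construct $\{(\Phi_a,\ro_a)\}_{a>1}$ explicitly, using the parameter $a$ to play the role of a ``base'' $N$ for an iterated James blocking argument, and then to verify conditions (a) and (b) by a direct (if slightly fiddly) estimate. Concretely, for each integer $a>1$ I would set $\ro_a(C)=(2/C)^{a}$ for $C>c_a\coloneqq 2$, so that condition (b) is immediate: $\ro_a(C)\to 0$ as $C\to\infty$. The function $\Phi_a(n,C)$ should be defined so that property (J) holds, and for that I will invoke Johnson's lemma \cite[Lemma~III.1]{johnson} as quoted in the excerpt, composed with Lemma~\ref{james_trick}.

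The first main step is to establish property (J) for this choice. Suppose a normalized monotone basic sequence $(e_i)$ admits no block $\ell_1^+$-$\ro_a(C)$-sequence of length $n$. I want to conclude a subsequential $C$-$\ell_q$-upper block estimate for a suitable $q=p'$. The idea is: if $(e_i)$ \emph{did} have a normalized block subsequence that were $10$-equivalent to the $\ell_1^m$-basis for $m$ as large as we like, then by Lemma~\ref{james_trick} (applied with $N=n$, $k$ chosen so that $m\geq n^k$ and $(1/10)^{1/k}\geq\ro_a(C)$), it would contain a block $\ell_1^+$-$\ro_a(C)$-sequence of length $n$ --- a contradiction. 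Hence there is an explicit bound $m=m(n,C)$ on the length of any block $\ell_1^m$-like ($10$-equivalent) subsequence, and Johnson's lemma then gives that $(e_i)$ satisfies subsequential $3$-$\ell_q$-upper block estimates for $q=p'$ with any $p$ such that $2m(n,C)^{1/p}<3$, i.e. $p\geq \log m(n,C)/\log(3/2)$. Since $C\geq 3$ eventually, a $3$-estimate is a fortiori a $C$-estimate, so I would define $\Phi_a(n,C)$ to be (a number slightly larger than) $\log m(n,C)/\log(3/2)$; chasing the dependence, $k$ is roughly $\log(1/\ro_a(C))/\log 10 \asymp a\log(C/2)/\log 10$ and $m=n^k$, so $\log m = k\log n$ and $\Phi_a(n,C)\asymp a\log(C/2)\log n$ up to absolute constants.

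The second main step is condition (a): given $\eta>0$ and $p\geq 1$, produce $\de>0$ and $a$ with $\Phi_a(\ro_a(C)^{-p-\de},C)\leq p+\eta$ for all large $C$. Plugging $n=\ro_a(C)^{-p-\de}=(2/C)^{-a(p+\de)}$ into the estimate above gives $\log n = a(p+\de)\log(C/2)$, hence $\Phi_a(n,C)\asymp a^2(p+\de)(\log(C/2))^2$ --- which \emph{grows} in $C$, the wrong direction. This signals that the crude exponential choice $\ro_a(C)=(2/C)^a$ is too lossy and must be replaced. The fix is to make $\ro_a(C)$ decay only polynomially slowly relative to $C$, or rather to let the James iteration be nearly trivial: choose $k=k(C)$ growing but with $\ro_a(C)^{1/k}$ staying bounded below by a constant like $1/10$, i.e. $\ro_a(C)\approx 10^{-k(C)}$ with $k(C)=o(\log(1/\ro_a(C)))$ impossible --- so instead one takes the \emph{other} regime of Johnson's lemma. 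I expect the correct design to be: fix $a$ to encode a target exponent, set $\ro_a(C)=C^{-1}$ (so (b) holds), and use the James trick with $k=1$, i.e. directly: no block $\ell_1^+$-$C^{-1}$-sequence of length $n$ forces (via Johnson, since an $\ell_1^+$-$C^{-1}$-sequence is in particular far from $\ell_1^n$ for the relevant constant) a subsequential $O(1)$-$\ell_{p'}$ estimate with $p \approx \log n/\log(\text{const})$, i.e. $\Phi_a(n,C)\asymp \log n$ with constants independent of $C$. Then $n=\ro_a(C)^{-p-\de}=C^{p+\de}$, so $\log n=(p+\de)\log C$, and $\Phi_a(n,C)\asymp (p+\de)\log C$ --- still the wrong direction.

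The resolution, which I expect to be the real content, is that Johnson's lemma must be applied in the form where the relevant ``$\ell_1$-constant'' is not the fixed $10$ but is allowed to degrade with $C$: the true statement one needs is that failing to contain a block $\ell_1^+$-$\ro$-sequence of length $n$ yields a subsequential $C$-$\ell_{p'}$ estimate with $p$ of order $\log n/\log(1/\ro) \cdot (\text{correction})$, so that taking $\ro=\ro_a(C)$ \emph{small} makes the denominator large and cancels the numerator's growth. In other words $\Phi_a(n,C)\approx \log n/\log(1/\ro_a(C)) + (\text{lower order})$; with $\ro_a(C)=C^{-a}$ and $n=\ro_a(C)^{-p-\de}$ we get $\log n/\log(1/\ro_a(C)) = p+\de$, so $\Phi_a(\ro_a(C)^{-p-\de},C)\approx p+\de+(\text{lower order in }C)\leq p+\eta$ for large $C$ after choosing $\de<\eta/2$ and $a$ large enough to kill the lower-order term. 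The hard part will be pinning down this refined form of Johnson's blocking lemma with the $\log(1/\ro)$ in the denominator and checking that the ``lower order'' correction genuinely tends to $0$ as $C\to\infty$; once that quantitative estimate is in hand, defining $\ro_a(C)=C^{-a}$, reading off $\Phi_a$ from it, and verifying (a) and (b) is routine. I would therefore organize the proof as: (1) state and prove the sharpened James/Johnson estimate; (2) define $\ro_a(C)=C^{-a}$ and $\Phi_a(n,C)$ from step (1), and check (J); (3) verify (b) trivially and (a) by the substitution computation above.
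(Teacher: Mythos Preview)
Your proposal is a plan, not a proof, and you say so yourself: the ``hard part will be pinning down this refined form of Johnson's blocking lemma with the $\log(1/\ro)$ in the denominator.'' That refined estimate \emph{is} the entire content of the lemma; once you have it, steps (2) and (3) are indeed routine, but you have not supplied it. Your first two attempts correctly diagnose why citing Johnson's lemma in its stated form (fixed constants $3$ and $10$) cannot work: it forces $\ro$ to be a constant, so condition~(b) fails, or else $\Phi_a$ blows up in $C$. Your final guess at the shape of the answer---$\ro_a(C)$ a negative power of $C$ and $\Phi_a(n,C)\asymp \log n/\log C$---is right and matches what the paper obtains, but the proposal stops exactly where the work begins.

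The paper does not black-box Johnson's lemma at all; it reproves the needed quantitative version directly. Given a block sequence $(y_j)$ violating the $C$-$\ell_q$ upper estimate, it takes a norming functional $f$ and stratifies the indices into level sets $E_i=\{j:\gamma^{\omega_i}\|y_j\|<f(y_j)\leq\gamma^{\omega_{i-1}}\|y_j\|\}$ for a geometric scale $\gamma^{\omega_i}$ with $1<\omega_i/\omega_{i-1}<a$. A H\"older computation shows that if every $|E_i|<n^{\omega_i}$ then the assumed violation is contradicted, provided $p\geq \Phi_a(n,C)=\omega_0\log n/\log(C-\sigma)$. Hence some $E_i$ has $|E_i|\geq n^{\omega_i}$, yielding an $\ell_1^+$-$\gamma^{\omega_i}$-sequence of that length, and Lemma~\ref{james_trick} then extracts an $\ell_1^+$-$\gamma$-sequence of length $n$; the choice $\gamma=\alpha n^{-a/p}$ makes $\gamma\geq\ro_a(C)=\alpha(C-\sigma)^{-a/\omega_0}$. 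This is precisely the ``$\log n/\log C$'' behaviour you were aiming for, and the geometric level-set decomposition is the missing idea your outline does not contain. Verifying (a) and (b) is then the one-line computation you anticipated.
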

\begin{proof}
Fix $a>1$ and pick any sequence $(\omega_i)_{i=0}^\infty$ of natural numbers with $1<\omega_i/\omega_{i-1}<a$ for $i\in\N$. Pick $\alpha\in (0,1)$ and define
$$
\Phi_a(n,C)=\frac{\omega_0\log n}{\log (C-\sigma)}\quad\mbox{for }C-1>\sigma:=\sum_{i=0}^\infty\alpha^{\omega_i}.
$$
Define also $\ro_a(C)=\alpha(C-\sigma)^{-a/\omega_0}$.

Fix $n\in\N$, $C>\sigma+1$ and consider any exponent $p>1$ with $p\geq\Phi_a(n,C)$. Let $(e_i)$ be a normalized monotone basic sequence and assume that there is a~(finite) block subsequence $(y_j)$ of $(e_i)$ such that
$$
\Bigl\|\sum_j y_j\Bigr\|>C\Bigl(\sum_j\n{y_j}^q\Bigr)^{\!\! 1/q},\quad\mbox{where }q:=p^\prime.
$$
We shall produce a block $\ell_1^+$-$\ro_a(C)$-subsequence of $(y_j)$ of length $n$. Hence, it is enough to find a~norm one functional $f$ so that $f(z_j)\geq\ro_a(C)$ for each $1\leq j\leq n$ and some $(z_j)_{j=1}^n$ being a~normalized block subsequence of $(y_j)$. (By the geometric Hahn--Banach theorem, it is actually equivalent to the existence of the~sequence $(y_j)$.)

Take a norm one functional $f$ so that $f(\sum_j y_j)=\n{\sum_j y_j}$. Set $\gamma=\alpha n^{-a/p}$ and define
$$
E_0=\bigl\{j\colon \gamma^{\omega_0}\n{y_j}<f(y_j)\leq\n{y_j}\bigr\}\,\mbox{ and }\,
E_i=\bigl\{j\colon \gamma^{\omega_i}\n{y_j}<f(y_j)\leq \gamma^{\omega_{i-1}}\n{y_j}\bigr\}\,\,\,\mbox{for }i\geq 1.
$$
We {\it claim} that the cardinality $\abs{E_i}\geq n^{\omega_i}$ for at least one $i\geq 0$. If this is not true, then by applying H\"older's inequality we obtain
\begin{equation*}
\begin{split}
\Bigl\|\sum_j y_j\Bigr\| &=f\Bigl(\sum_j y_j\Bigr)\leq \sum_{i=0}^\infty\sum_{j\in E_i}f(y_j)\\
& \leq \sum_{j\in E_0}\n{y_j}+\sum_{i=1}^\infty\gamma^{\omega_{i-1}}\sum_{j\in E_i}\n{y_j}\\
& \leq \abs{E_0}^{1/p}\Bigl(\sum_{j\in E_0}\n{y_j}^q\Bigr)^{\!\! 1/q}+\sum_{i=1}^\infty \gamma^{\omega_{i-1}}\abs{E_i}^{1/p}\Bigl(\sum_{j\in E_i}\n{y_j}^q\Bigr)^{\!\! 1/q}\\
&<\Bigl(n^{\omega_0/p}+\sum_{i=1}^\infty\alpha^{\omega_{i-1}}\!\cdot\! n^{(\omega_i-a\omega_{i-1})/p}\Bigr)\Bigl(\sum_j\n{y_j}^q\Bigr)^{\!\! 1/q}\\
&<\bigl(n^{\omega_0/p}+\sigma\bigr)\Bigl(\sum_j\n{y_j}^q\Bigr)^{\!\! 1/q}\leq C\Bigl(\sum_j\n{y_j}^q\Bigr)^{\!\! 1/q}
\end{split}
\end{equation*}
because $p\geq\Phi_a(n,C)$, and hence we arrive at a~contradiction.

Pick an index $i$ with $\abs{E_i}\geq n^{\omega_i}$. By normalizing the vectors from $\{y_j\colon j\in E_i\}$ we obtain a~sequence $(z_j)$ of length at least $n^{\omega_i}$ which consists of unit block vectors and satisfies $f(z_j)\geq\gamma^{\omega_i}$ for each $j$. This means that $(z_j)$ forms an~$\ell_1^+$-$\gamma^{\omega_i}$-sequence and an~appeal to Lemma~\ref{james_trick} produces an~$\ell_1^+$-$\gamma$-sequence of length $n$. Notice that since $n^{\omega_0/p}\leq C-\sigma$, we have
$$
\gamma=\alpha n^{-a/p}\geq \alpha (C-\sigma)^{-a/\omega_0}=\ro_a(C),
$$
so the resulting sequence is in fact an~$\ell_1^+$-$\ro_a(C)$-sequence. This shows that $\{(\Phi_a,\ro_a)\}_{a>1}$ yields an~$\ell_1^+$-method.

It remains to verify conditions (a) and (b). For arbitrarily fixed $\eta>0$ and $p\geq 1$ note that
$$
\Phi_a(\ro_a(C)^{-p-\de},C)\leq p+\eta\quad\mbox{ if and only if }\quad\displaystyle{(C-\sigma)^{(a-\frac{p+\eta}{p+\de})/\omega_0}}\leq\alpha.
$$
This can be easily guaranteed once we take $\de<\eta$, $a<\frac{p+\eta}{p+\de}$ and $C$ sufficiently large. Hence, condition (a) holds true. Condition (b) is obvious by the very definition.
\end{proof}

\begin{proposition}\label{P2}
If $X$ is a separable Banach space with $\Sz(X)\leq\omega$, then it satisfies subsequential $\ell_q$-upper tree estimates for every $q<\mathsf{p}(X)^\prime$.
\end{proposition}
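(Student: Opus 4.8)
The plan is to obtain Proposition~\ref{P2} as a short combination of Lemmas~\ref{method1} and~\ref{method2} with Theorem~\ref{OS_theorem}. Observe first that a~separable space $X$ with $\Sz(X)\leq\omega$ is Asplund, so that $X^\ast$ is separable and Theorem~\ref{OS_theorem} is applicable to $X$. By Lemma~\ref{method2} there exists an~$\ell_1^+$-method $\{(\Phi_a,\ro_a)\}_{a>1}$ satisfying conditions~(a) and~(b), hence Lemma~\ref{method1} applies and yields: for every $q<\mathsf{p}(X)^\prime$ there is a~constant $K_q<\infty$ such that $\n{\sum_{i=1}^n a_ie_i}\leq K_q(\sum_{i=1}^n\abs{a_i}^q)^{1/q}$ for all $n\in\N$, $(e_i)_{i=1}^n\in\{X\}_{n}$ and all scalars $(a_i)_{i=1}^n$. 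In other words, assertion~(ii) of Theorem~\ref{OS_theorem} holds with \emph{every} exponent in the open range $(1,\mathsf{p}(X)^\prime)$.

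Now fix $q<\mathsf{p}(X)^\prime$. We may assume $q>1$, since for $q\leq 1$ the required estimate $\n{\sum_n a_nx_n}\leq C(\sum_n\abs{a_n}^q)^{1/q}$ holds (with $C=1$) for any normalized branch of any weakly null tree on $S_X$. Pick an~auxiliary exponent $q_0$ with $q<q_0<\mathsf{p}(X)^\prime$ (here $q_0$ may be chosen finite even when $\mathsf{p}(X)^\prime=\infty$). By the previous paragraph, assertion~(ii) of Theorem~\ref{OS_theorem} is satisfied with the exponent $q_0$. Hence, by the sharpened form of assertion~(iii) of that theorem---namely that $X$ then satisfies subsequential $\ell_{\overline q}$-upper tree estimates for \emph{any} $\overline q\in(1,q_0)$---we conclude that $X$ satisfies subsequential $\ell_q$-upper tree estimates, since $q\in(1,q_0)$. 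This finishes the proof.

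The argument is purely a~bookkeeping step with exponents; all the substance sits in Lemmas~\ref{method1} and~\ref{method2} (and, further upstream, in Theorems~\ref{AJO_theorem} and~\ref{OS_theorem}). The one point requiring a~little care is that Theorem~\ref{OS_theorem}(iii) delivers $\ell_{\overline q}$-upper tree estimates only for exponents $\overline q$ \emph{strictly} below the one appearing in~(ii); this is precisely why one must first interpose $q_0$ strictly between $q$ and $\mathsf{p}(X)^\prime$, which is legitimate because Lemmas~\ref{method1}--\ref{method2} supply assertion~(ii) for the whole interval $(1,\mathsf{p}(X)^\prime)$ rather than for a~single exponent. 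The boundary cases ($q\leq 1$, or $X$ finite-dimensional, in which case $\mathsf{p}(X)^\prime$ admits no $q>1$) are vacuous.
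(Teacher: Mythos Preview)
Your proof is correct and follows exactly the paper's approach: the paper's own proof is the single line ``Just combine Lemmas~\ref{method1}, \ref{method2} and Theorem~\ref{OS_theorem},'' and you have simply spelled out how this combination goes, including the interposition of an auxiliary exponent $q_0\in(q,\mathsf{p}(X)')$ needed because Theorem~\ref{OS_theorem}(iii) only delivers exponents strictly below the one in~(ii). The only quibble is your parenthetical remark on the finite-dimensional case (where $\mathsf{p}(X)=0$ and $\mathsf{p}(X)'$ is not really defined, but the conclusion is vacuously true since $S_X$ carries no weakly null trees); this is harmless.
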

\begin{proof}
Just combine Lemmas~\ref{method1}, \ref{method2} and Theorem~\ref{OS_theorem}.
\end{proof}

Finally, we are prepared to complete the proof of our main result in the separable case by repeating the general scheme of the proof of Causey's theorem mentioned in the introduction.
\begin{proof}[Proof of Main Theorem (the separable case)]
Set $p=\max\{\mathsf{p}(X),\mathsf{p}(Y)\}$. Then Proposition~\ref{P2} says that for every $q<p^\prime$ both $X$ and $Y$ satisfy subsequential $\ell_q$-upper tree estimates. By the Freeman--Odell--Schlumprecht--Zs\'ak theorem \cite[Thm.~1.1]{FOSZ}, both these spaces can be embedded in some Banach spaces, say $W$ and $Z$, having shrinking bimonotone FDD's which satisfy subsequential $C$-$\ell_q$-upper block estimates, with some $C\geq 1$. In view of Causey's result \cite[Lemma~6.6]{causey}, the space $W\inj Z$ has an~FDD which satisfies subsequential $2C$-$\ell_q$-upper block estimates and hence Proposition~\ref{Prop1} implies that
$$
\mathsf{p}(W\inj Z)\leq\mathsf{p}(\ell_q)=q^\prime.
$$
Since $q$ can be taken arbitrarily close to $p^\prime$ and $X\inj Y\hookrightarrow W\inj Z$, the result follows.
\end{proof}

\section{The nonseparable case}

We start with a simple fact which guarantees that summability of the Szlenk index and its power type are inherited by subspaces in a `uniform' way.

\begin{lemma}\label{lemsubspace}
Let $X$ be a Banach space, $Y$ be a~subspace of $X$ and $\e_1,\ldots,\e_n>0$. Then we have
\[ \iota_{\e_1/2}\ldots\iota_{\e_n/2} B_{X^\ast}\neq\varnothing\quad\mbox{ whenever }\quad\iota_{\e_1}\ldots\iota_{\e_n} B_{Y^\ast}\neq\varnothing. \]
\end{lemma}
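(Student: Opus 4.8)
The plan is to use the restriction map $R\colon X^\ast\to Y^\ast$, $R x^\ast = x^\ast|_Y$, which is weak$^\ast$-to-weak$^\ast$ continuous, and to track how Szlenk derivations behave under it. First I would record the two elementary properties of $R$: it is a quotient map onto $Y^\ast$ (by Hahn--Banach, $R(B_{X^\ast})=B_{Y^\ast}$), and, because the dual ball of $X$ is weak$^\ast$-compact and $R$ is weak$^\ast$-continuous, $R$ maps weak$^\ast$-compact subsets of $X^\ast$ onto weak$^\ast$-compact subsets of $Y^\ast$. The crux is a derivation inequality: for any weak$^\ast$-compact $K\subseteq X^\ast$ and any $\e>0$,
\[
R\bigl(\iota_{\e/2} K\bigr)\supseteq \iota_\e\bigl(R(K)\bigr).
\]
Given this, an immediate induction on $n$ (using $R(\iota_{\e_{n}/2}\cdots\iota_{\e_1/2}B_{X^\ast}) \supseteq \iota_{\e_n}\cdots\iota_{\e_1}(R(B_{X^\ast})) = \iota_{\e_n}\cdots\iota_{\e_1}B_{Y^\ast}$) yields the statement: if the right-hand side is nonempty, so is $\iota_{\e_n/2}\cdots\iota_{\e_1/2}B_{X^\ast}$, which is what we want (the order of the $\e_i$'s is immaterial here).

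To prove the derivation inequality, I would argue contrapositively at the level of points. Suppose $y^\ast \in \iota_\e(R(K))$; I must find $x^\ast \in \iota_{\e/2}K$ with $Rx^\ast = y^\ast$. Consider the weak$^\ast$-compact fibre $F = K\cap R^{-1}(\{y^\ast\})$, which is nonempty since $y^\ast\in R(K)$. The claim is that $F$ meets $\iota_{\e/2}K$. If not, then every point of $F$ has a weak$^\ast$-open neighbourhood in $X^\ast$ on which $K$ has diameter $\leq \e/2$; by compactness of $F$ finitely many such neighbourhoods $U_1,\dots,U_m$ cover $F$, and $U = \bigcup_i U_i$ is a weak$^\ast$-open set containing $F$ with the property that $K\cap U$ is covered by sets of diameter $\leq \e/2$. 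Now I would use that $R$ is a weak$^\ast$-open map on $B_{X^\ast}$ -- or, more carefully, exploit that $R$ restricted to the compact set $K$ is a closed map, so $R(K\setminus U)$ is weak$^\ast$-closed and does not contain $y^\ast$; hence there is a weak$^\ast$-open neighbourhood $V$ of $y^\ast$ in $Y^\ast$ with $R^{-1}(V)\cap K \subseteq U$. Then for any $z^\ast, w^\ast \in R(K)\cap V$, lifting them to $K\cap R^{-1}(V)\subseteq K\cap U$ one sees $\|z^\ast - w^\ast\| \leq \|\tilde z^\ast - \tilde w^\ast\| \leq \e/2 < \e$ whenever the two lifts lie in a common $U_i$; combining the finitely many pieces (and shrinking $V$ further so that $R(K)\cap V$ is small enough to be controlled, or using a Lebesgue-number style argument on the cover) gives $\diam(R(K)\cap V)\leq\e$, contradicting $y^\ast\in\iota_\e(R(K))$.

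The main obstacle is precisely this last covering argument: passing from "$K\cap U$ is a \emph{union} of finitely many sets of small diameter" to "$R(K)$ has small diameter near $y^\ast$" is not automatic, because small-diameter pieces can glue into a large-diameter set. The clean fix is to shrink $U$ at the outset: since $F$ is weak$^\ast$-compact and contained in the single $\e/2$-derivation-free open set, and since $y^\ast$ is a single point, I would instead pick for the \emph{whole} fibre $F$ one weak$^\ast$-open $U\supseteq F$ with $\diam(K\cap U)\leq\e/2$ -- this is possible because $F$ has diameter $0$ in the quotient but one needs $K\cap U$ itself small, which follows once every point of $F$ is outside $\iota_{\e/2}K$ \emph{and} $F$ is connected-enough; the honest route is to observe that if $\diam F \leq \e/2$ already fails we are fine, and otherwise use that $F\cap\iota_{\e/2}K=\varnothing$ forces, via compactness, an open $U\supseteq F$ on which $K$ has diameter at most $\e/2 + \diam F$, then note $\diam F\leq\|{\cdot}\|$-diameter is irrelevant since $R$ is an isometry on the fibre direction only up to the norm -- in short, the bookkeeping of constants (why $\e/2$ and not $\e$) is where care is needed, and choosing the neighbourhood $V$ of $y^\ast$ small enough that both lifts of any pair in $R(K)\cap V$ land in one common small piece is the decisive technical point. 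Once the derivation inequality is in hand with the factor $2$, the induction and the conclusion are routine.
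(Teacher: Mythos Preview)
Your overall architecture is exactly that of the paper: reduce everything to the derivation inequality $\iota_\e(R(K))\subseteq R(\iota_{\e/2}K)$ for weak$^\ast$-compact $K\subseteq X^\ast$, then iterate. The paper does not actually prove this inequality but simply cites \cite[Lemma~2.39]{hsmvz}, so your attempt to supply a direct argument is useful.

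Your setup for that argument is correct: fix $y^\ast\in\iota_\e(R(K))$, assume the fibre $F=K\cap R^{-1}(\{y^\ast\})$ misses $\iota_{\e/2}K$, cover $F$ by finitely many weak$^\ast$-open sets $U_1,\dots,U_m$ with $\diam(K\cap U_i)\leq\e/2$, and use that $R|_K$ is closed to produce a weak$^\ast$-open $V\ni y^\ast$ with $K\cap R^{-1}(V)\subseteq\bigcup_i U_i$. Where the proof breaks down is the diameter estimate on $R(K)\cap V$: you correctly flag that two lifts may land in different $U_i$'s, but none of your proposed fixes works. A Lebesgue-number argument is unavailable (there is no metric for the weak$^\ast$ topology in general), shrinking $V$ does not force lifts into a common $U_i$, and your ``clean fix'' of finding a single $U\supseteq F$ with $\diam(K\cap U)\leq\e/2$ is impossible in general since $F$ itself may have diameter much larger than $\e/2$ in $X^\ast$. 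The last paragraph does not converge to a valid argument.

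The missing observation is that each $U_i$ can be chosen around a point $x_i^\ast\in F$, and these all satisfy $Rx_i^\ast=y^\ast$. Hence for \emph{any} $z^\ast\in R(K)\cap V$, choosing a lift $\tilde z^\ast\in K\cap U_i$ for some $i$, one has
\[
\|z^\ast-y^\ast\|=\|R\tilde z^\ast-Rx_i^\ast\|\leq\|\tilde z^\ast-x_i^\ast\|\leq\diam(K\cap U_i)\leq\tfrac{\e}{2},
\]
so $R(K)\cap V$ lies in the ball of radius $\e/2$ about $y^\ast$ and therefore has diameter at most $\e$, contradicting $y^\ast\in\iota_\e(R(K))$. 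This is precisely where the factor $2$ comes from. With this one-line fix your proof is complete and self-contained, whereas the paper outsources the key inequality to the literature.
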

\begin{proof}
In the proof of \cite[Lemma 2.39]{hsmvz} it was shown that if $K\subseteq B_{X^\ast}$ and $L\subseteq B_{Y^\ast}$ are weak$^\ast$-compact sets such that $L\subseteq j^\ast(K)$, then $\iota_{\e}L\subseteq j^\ast(\iota_{\e/2}K)$ for every $\e>0$, where $j^\ast$ stands for the~adjoint of the inclusion operator $j\colon Y\to X$. Putting $K=B_{X^\ast}$ and $L=B_{Y^\ast}$ we easily obtain
\[ \iota_{\e_1}\ldots\iota_{\e_n} B_{Y^\ast}\subseteq j^{\ast}(\iota_{\e_1/2}\ldots\iota_{\e_n/2} B_{X^\ast}), \]
which gives the assertion.
\end{proof}

The following lemma is, in a sense, a quantitative version of \cite[Lemma 3.4]{lancien1}.
\begin{lemma}\label{lemsep}
Let $X$ be a Banach space and $\e_1,\ldots,\e_n>0$. Then there exists a separable space $Y\subseteq X$ such that
\[ \iota_{\e_1/4}\ldots\iota_{\e_n/4} B_{Y^\ast}\neq\varnothing\quad\mbox{ whenever }\quad\iota_{\e_1}\ldots\iota_{\e_n} B_{X^\ast}\neq\varnothing. \]
\end{lemma}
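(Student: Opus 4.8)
The plan is to build the separable subspace $Y$ by a standard back-and-forth closure argument that captures enough of $X$ and $X^*$ to witness the nonemptiness of the iterated Szlenk derivation. The hypothesis $\iota_{\e_1}\ldots\iota_{\e_n}B_{X^*}\neq\varnothing$ is, via Lemma~\ref{gkl_lemma}, essentially equivalent to the existence of a weak$^*$-null tree $(x^*_\alpha)_{\alpha\in S_n}$ on $X^*$ of order $n$ with $\n{x^*_\alpha}\geq\frac14\e_{\abs\alpha}$ on each node and $\n{\sum_{\alpha\in\Gamma}x^*_\alpha}\leq 1$ on each branch $\Gamma$. Such a tree is built from countably many functionals, but it lives on $X^*$, not on $Y^*$, and the weak$^*$-null condition refers to the topology $\sigma(X^*,X)$. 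The point of the argument is to choose a separable $Y\subseteq X$ large enough that the restrictions $(x^*_\alpha|_Y)$ still form an admissible tree on $Y^*$ (with constants degraded by at most a factor of $4$), and then invoke the sufficiency direction of Lemma~\ref{gkl_lemma} applied to $Y$.

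Concretely, I would first fix the witnessing tree $(x^*_\alpha)_{\alpha\in S_n}$ on $X^*$ given by the necessity part of Lemma~\ref{gkl_lemma}: order $n$, $\n{x^*_\alpha}\geq\frac14\e_{\abs\alpha}$, branch sums bounded by $1$, and every $s$-subsequence $\sigma(X^*,X)$-null. Since $S_n$ is countable, this involves only countably many functionals $\{x^*_\alpha\}$. For each $\alpha$ pick a sequence $(y_{\alpha,k})\subseteq B_X$ with $x^*_\alpha(y_{\alpha,k})\to\n{x^*_\alpha}$, witnessing that the norm of $x^*_\alpha|_Y$ stays $\geq\n{x^*_\alpha}$ provided $Y$ contains these countably many vectors; this secures the lower norm bound on nodes (in fact with no loss, so the factor $4$ in the statement is comfortably absorbed). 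The branch-sum upper bound $\n{\sum_{\alpha\in\Gamma}x^*_\alpha|_Y}\leq\n{\sum_{\alpha\in\Gamma}x^*_\alpha}\leq 1$ is automatic since restriction does not increase norm. The one remaining issue is preserving the weak$^*$-null property: an $s$-subsequence $(x^*_{\beta_i})_i$ that is $\sigma(X^*,X)$-null restricts to a sequence that is $\sigma(Y^*,Y)$-null for free, because for $y\in Y\subseteq X$ we have $x^*_{\beta_i}|_Y(y)=x^*_{\beta_i}(y)\to 0$. So in fact the restricted tree is admissible on $Y^*$ for \emph{any} separable $Y$ containing the vectors $y_{\alpha,k}$; set $Y=\overline{\span}\{y_{\alpha,k}:\alpha\in S_n,\ k\in\N\}$, which is separable. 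Applying the sufficiency direction of Lemma~\ref{gkl_lemma} to $Y$ with the sequence $\tfrac14\e_1,\ldots,\tfrac14\e_n$ then yields $\iota_{\e_1/4}\ldots\iota_{\e_n/4}B_{Y^*}\neq\varnothing$.

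I expect no serious obstacle here; the subtlety, if any, is purely bookkeeping. The necessity part of Lemma~\ref{gkl_lemma} is stated for separable $X$, so strictly one should note that the tree it produces is built entirely from countably many functionals and countably many test vectors, so the separability hypothesis there can be replaced by first passing to an auxiliary separable subspace or, more cleanly, by extracting the tree directly from the derivation structure as in \cite{gkl}; alternatively one invokes the nonseparable form of the tree characterization. The factors of $2$ and $4$ (versus the $4$ in the target) leave room to spare, so there is no tightness to worry about. In writing the proof I would keep it short: quote Lemma~\ref{gkl_lemma} for the tree, define $Y$ as the closed span of the chosen vectors, observe restriction preserves all three properties (nodes' norms, branch bounds, weak$^*$-nullity), and conclude by the sufficiency half of Lemma~\ref{gkl_lemma}.
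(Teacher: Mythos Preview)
Your overall strategy---extract a tree of functionals, restrict to a separable subspace spanned by norm-witnessing vectors, and apply the sufficiency half of Lemma~\ref{gkl_lemma}---is exactly the shape of the paper's argument. The gap is the step you flag and then dismiss: invoking the necessity half of Lemma~\ref{gkl_lemma} for a nonseparable $X$. That lemma genuinely uses separability, because it relies on weak$^\ast$-metrizability of $B_{X^\ast}$ to produce $\sigma(X^\ast,X)$-convergent \emph{sequences} from the derivation structure; for general $X$ there is no reason a point of $\iota_\e K$ is a weak$^\ast$-\emph{sequential} limit of far-away points in $K$. Your proposed fixes are either circular (``first pass to an auxiliary separable subspace'' is precisely what the present lemma is meant to accomplish) or hide the actual work (``extract the tree directly from the derivation structure'' is the nontrivial part, and there is no ready-made nonseparable tree characterization to quote).

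The paper resolves this by \emph{not} trying to build a $\sigma(X^\ast,X)$-null tree at all. Instead it constructs the functionals $x^\ast_\alpha$ and the test vectors $x_\alpha\in B_X$ simultaneously, by induction on $\max\alpha$: at each stage one chooses $x^\ast_{\alpha\frown m}$ from the appropriate iterated derivation inside a weak$^\ast$-neighborhood determined by the \emph{finitely many} vectors $x_\beta$ already chosen (forcing $|x^\ast_{\alpha\frown m}(x_\beta)|\leq 2^{-m}$ on those), and then picks $x_{\alpha\frown m}\in B_X$ witnessing $\n{x^\ast_{\alpha\frown m}}>\tfrac14\e_{|\alpha|+1}$. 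The resulting tree need not be weak$^\ast$-null on $X^\ast$, but its restriction to $Y=\overline{\span}\{x_\alpha:\alpha\in S_n\}$ is $\sigma(Y^\ast,Y)$-null, since convergence need only be checked on the countable spanning set $\{x_\alpha\}$ and that was arranged by construction. This intertwining of the choices of functionals and test vectors is the missing idea in your sketch; once it is in place, your remaining observations (restriction preserves branch-sum bounds and, via the witnessing vectors, the node-norm lower bounds) carry the argument through exactly as you describe.
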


\begin{proof}
Given $n\in\N$, let $(\alpha_m)_{m=1}^\infty$ be an enumeration of all elements of the tree $S_n$ so that $\max \alpha_k\leq\max\alpha_l$ whenever $k<l$. Define $\p\colon\N\to S_n$ by $\p(m)=\alpha_m$; note that $\p$ is surjective and $\max\p(m)\leq m$ for each $m\in\N$.

Assume $\iota_{\e_1}\ldots\iota_{\e_n} B_{X^\ast}\neq\varnothing$. By induction on $\max\alpha$, we shall construct a~tree $(x^\ast_\alpha)_{\alpha\in S_n}$ on $X^\ast$ of order $n$, and a~family $\{x_\alpha\colon\alpha\in S_n\}\subset B_X$, such that the following conditions are satisfied:
\begin{enumerate}
\item[\rm(i)] $x_\alpha^\ast(x_\alpha)>\frac14\e_{\abs{\alpha}}$ for each $\alpha\in S_n$;

\vspace*{2mm}
\item[\rm(ii)]
$\arraycolsep=1.4pt
\left\{\begin{array}{ll}
\sum_{\beta\leq\alpha} x_\beta^\ast \in \iota_{\e_{\abs{\alpha}+1}/2} \ldots \iota_{\e_n/2}B_{X^\ast} & \mbox{ for }\abs{\alpha}\leq n-1,\\[2mm]
\sum_{\beta\leq\alpha} x_\beta^\ast \in B_{X^\ast} & \mbox{ for }\abs{\alpha}=n;
\end{array}
\right.$

\vspace*{2mm}
\item[\rm(iii)] $\abs{x_{\alpha\smallfrown m}^\ast(x_{\p(k)})}\leq 2^{-m}$ for  $\alpha\!\frown\! m\in S_n$ and $1\leq k<m$.
\end{enumerate}
Since $\iota_{\e_1}\ldots\iota_{\e_n} B_{X^\ast}\neq\varnothing$, we have
$$
0\in\frac12\iota_{\e_1}\ldots\iota_{\e_n} B_{X^\ast}+\frac12B_{X^\ast}\subseteq\iota_{\e_1/2}\ldots\iota_{\e_n/2} B_{X^\ast}.
$$
Given $m\in\N\cup\{0\}$, suppose that all the elements of the form $x_{\alpha\smallfrown k}$ and $x_{\alpha\smallfrown k}^\ast$, for $1\leq k\leq m$ and $\alpha\!\frown\! k\in S_n$, have been constructed in such a~way that they satisfy conditions (i)--(iii). Take $\alpha\in S_n\cup\{\varnothing\}$ with $\abs{\alpha}<n$ and $\max\alpha<m+1$ (we set $\max\varnothing=0$); we are to define $x_{\alpha\smallfrown m+1}$ and $x_{\alpha\smallfrown m+1}^\ast$. Observe that condition (ii) implies that
$$
\diam \Big( V\cap \iota_{\e_{\abs{\alpha}+2}/2} \ldots \iota_{\e_n/2}B_{X^\ast}\Big) > \frac{\e_{\abs{\alpha}+1}}{2}
$$
for each weak$^\ast$-neighborhood $V$ of $\sum_{\beta\leq\alpha} x_\beta^\ast$. In particular, there is $x^\ast\in \iota_{\e_{\abs{\alpha}+2}/2} \ldots \iota_{\e_n/2}B_{X^\ast}$ such that
$$
\Bigg\| x^\ast-\sum_{\beta\leq \alpha} x_\beta^\ast \Bigg\| > \frac{\e_{\abs{\alpha}+1}}{4}\,\,\,\mbox{ and }\,\,\,\Bigg|\Big( x^\ast-\sum_{\beta\leq\alpha} x_\beta^\ast\Big)(x_{\p(k)}) \Bigg|\leq\frac{1}{2^{m+1}}\quad\mbox{for } 1\leq k\leq m.
$$
Define $x_{\alpha\smallfrown m+1}^\ast=x^\ast-\sum_{\beta\leq\alpha} x_\beta^\ast$. Plainly, conditions (ii) and (iii) are satisfied. To finish the construction pick any $x_{\alpha\smallfrown m+1}\in B_X$ such that $x_{\alpha\smallfrown m+1}^\ast(x_{\alpha\smallfrown m+1})>\frac14\e_{\abs{\alpha}+1}$.

Set $Y=\overline{\span}\,\{x_\alpha\colon \alpha\in S_n\}$ and $y_\alpha^\ast=x_\alpha^\ast\!\upharpoonright\! Y$ for every $\alpha\in S_n$. Condition (iii) guarantees that $(y_\alpha^\ast)_{\alpha\in S_n}$ is a~weak$^\ast$-null tree on $Y^\ast$ of order $n$. Moreover, by conditions (i) and (ii), we have
\begin{enumerate}
\item[\rm(i')] $\n{y_\alpha^\ast}>\frac14\e_{\abs{\alpha}}$ for each $\alpha\in S_n$;

\vspace*{2mm}
\item[\rm(ii')]
$\n{\sum_{\alpha\in\Gamma}y_\alpha^\ast}\leq1$ for every branch $\Gamma\subset S_n$.
\end{enumerate}
Therefore, Lemma~\ref{gkl_lemma} yields $\iota_{\e_1/4}\ldots\iota_{\e_n/4} B_{Y^\ast}\neq\varnothing$.
\end{proof}

We are ready to show that summability of the Szlenk index and the Szlenk power type are separably determined. First, recall that a~Banach space $X$ is said to have {\it summable Szlenk index} if there is a~constant $M$ such that for all positive $\e_1,\ldots,\e_n$ we have $\sum_{i=1}^n\e_i\leq M$ whenever $\iota_{\e_1}\ldots\iota_{\e_n}B_{X^\ast}\not=\varnothing$. Then we also say that $X$ has summable Szlenk index {\it with constant $M$}. Given any family of Banach spaces, we shall say that they have {\it uniformly summable Szlenk index} provided that all of them have summable Szlenk index with the same constant.

\begin{proposition}
A Banach space has summable Szlenk index if every its separable subspace does.
\end{proposition}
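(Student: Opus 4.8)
The plan is to prove the contrapositive: if $X$ fails to have summable Szlenk index, then so does some separable subspace of $X$. Assume, then, that $X$ does not have summable Szlenk index. By definition, for every $k\in\N$ there exist $n_k\in\N$ and positive numbers $\e_1^{(k)},\ldots,\e_{n_k}^{(k)}$ with $\sum_{i=1}^{n_k}\e_i^{(k)}>k$ and $\iota_{\e_1^{(k)}}\ldots\iota_{\e_{n_k}^{(k)}}B_{X^\ast}\neq\varnothing$.

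For each $k$ I would invoke Lemma~\ref{lemsep} to produce a separable subspace $Y_k\subseteq X$ with $\iota_{\e_1^{(k)}/4}\ldots\iota_{\e_{n_k}^{(k)}/4}B_{Y_k^\ast}\neq\varnothing$. The point where a little care is needed is that the hypothesis supplies a summability constant for \emph{each} $Y_k$ separately, and a priori these constants may grow with $k$; to obtain a single offending separable space one sets $Y\coloneqq\oo{\span}\bigcup_{k=1}^\infty Y_k$, which is again separable, being the closed linear span of countably many separable sets.

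Finally, for each $k$ I would apply Lemma~\ref{lemsubspace} to the inclusion $Y_k\subseteq Y$ with the parameters $\e_i^{(k)}/4$, obtaining $\iota_{\e_1^{(k)}/8}\ldots\iota_{\e_{n_k}^{(k)}/8}B_{Y^\ast}\neq\varnothing$, while $\sum_{i=1}^{n_k}\e_i^{(k)}/8>k/8$. Letting $k\to\infty$ shows that no constant can bound the sums of such witnessing tuples for $Y$, so $Y$ does not have summable Szlenk index, contradicting the hypothesis. The main (and essentially the only) obstacle is the uniformity issue just described, which forces one to absorb all the spaces $Y_k$ into a single separable space $Y$; the constant losses of factors $4$ and $2$ coming from Lemmas~\ref{lemsep} and~\ref{lemsubspace} are harmless, since we only need the sums $\sum_i\e_i^{(k)}/8$ to be unbounded, not bounded below by a fixed positive number.
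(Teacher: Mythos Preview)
Your proof is correct and follows essentially the same approach as the paper: apply Lemma~\ref{lemsep} to obtain separable witnesses, take the closed linear span of countably many of them to get a single separable subspace $Y$, and then use Lemma~\ref{lemsubspace} to transfer the nonemptiness of the iterated derivations from each witness up to $Y$. The only cosmetic difference is that the paper builds $Y$ once and for all by ranging over \emph{every} finite tuple of positive rationals (so the resulting $Y$ works uniformly for any future $\e_1,\ldots,\e_n$), whereas you first fix a sequence of witnessing tuples and build $Y$ from those; for the bare statement at hand both variants are equally valid.
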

\begin{proof}
Given a Banach space $X$ and positive numbers $\e_1,\ldots,\e_n$, let $Y(\e_1,\ldots,\e_n)$ be a~separable subspace of $X$ constructed according to Lemma~\ref{lemsep}. Denote by $\mathcal{E}$ the collection of all finite sequences of positive rational numbers and set
$$
Y=\overline{\span}\, \!\bigcup_{(\de_1,\ldots,\de_n)\in\mathcal{E}}\!Y(\de_1,\ldots,\de_n).
$$
As $Y$ is a~separable subspace of $X$, it is enough to show that $Y$ has nonsummable Szlenk index provided that $X$ does too.

Suppose, towards a contradiction, that $Y$ has summable Szlenk index with constant $M$ and consider any $\e_1,\ldots,\e_n>0$ with $\iota_{\e_1}\ldots\iota_{\e_n} B_{X^\ast}\neq\varnothing$. For each $k$ pick a~rational number ${\de_k\in(\e_k/2,\e_k)}$; of course, $\iota_{\de_1}\ldots\iota_{\de_n} B_{X^\ast} \neq\varnothing$. Hence, $\iota_{\de_1/4} \ldots\iota_{\de_n/4} B_{Y(\de_1,\ldots,\de_n)^\ast} \neq\varnothing$. Therefore, by Lemma~\ref{lemsubspace} we have $\iota_{\de_1/8}\ldots\iota_{\de_n/8} B_{Y^\ast}\neq\varnothing$, which implies $\de_1+\ldots+\de_n\leq 8M$. Thus $\e_1+\ldots+\e_n\leq 16M$ which proves that $X$ has summable Szlenk index.
\end{proof}

\begin{proposition}\label{proppower}
If $X$ is a Banach space with $\Sz(X)=\omega$, then there is a separable space $Y\subseteq X$ with $\mathsf{p}(Y)=\mathsf{p}(X)$.
\end{proposition}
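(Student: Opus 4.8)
The plan is to mimic the separable-determination argument for summability of the Szlenk index, but now controlling the Szlenk \emph{power type} rather than a single summability constant. Recall that $\mathsf{p}(X) = \limsup_{\e\to 0^+}\frac{\log\Sz(X,\e)}{|\log\e|}$, so to get a separable $Y\subseteq X$ with $\mathsf{p}(Y)=\mathsf{p}(X)$ one needs two inequalities. The inequality $\mathsf{p}(Y)\leq\mathsf{p}(X)$ is automatic from Lemma~\ref{lemsubspace}: for any subspace $Y\subseteq X$ and any $\e>0$, if $\iota_\e^n B_{Y^\ast}\neq\varnothing$ then $\iota_{\e/2}^n B_{X^\ast}\neq\varnothing$, hence $\Sz(Y,\e)\leq\Sz(X,\e/2)$, and taking $\log/|\log\e|$ and letting $\e\to 0^+$ gives $\mathsf{p}(Y)\leq\mathsf{p}(X)$. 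So the real content is producing a single separable $Y$ with $\mathsf{p}(Y)\geq\mathsf{p}(X)$.

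\emph{First} I would choose, by definition of the $\limsup$, a sequence $\e_j\to 0^+$ along which $\frac{\log\Sz(X,\e_j)}{|\log\e_j|}\to\mathsf{p}(X)$; pick rationals $\de_j\in(\e_j/2,\e_j)$, so that the same limit holds along $(\de_j)$ since $\Sz(X,\cdot)$ is monotone and $|\log\de_j|/|\log\e_j|\to 1$. Writing $n_j=\Sz(X,\de_j)-1$ (a successor ordinal, finite since $\Sz(X)\leq\omega$), we have $\iota_{\de_j}^{n_j} B_{X^\ast}\neq\varnothing$, i.e.\ $\iota_{\de_j}\cdots\iota_{\de_j} B_{X^\ast}\neq\varnothing$ with $n_j$ factors. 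Now apply Lemma~\ref{lemsep} to this constant tuple $(\de_j,\ldots,\de_j)$ to obtain a separable subspace $Y_j := Y(\de_j,\ldots,\de_j)\subseteq X$ with $\iota_{\de_j/4}^{n_j} B_{Y_j^\ast}\neq\varnothing$. \emph{Then} set
\[
Y=\overline{\span}\,\bigcup_{j=1}^\infty Y_j,
\]
which is a countable union of separable spaces, hence separable, and a subspace of $X$.

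\emph{The key step} is now to push the nonemptiness down from each $Y_j$ to $Y$. Since $Y_j\subseteq Y$, Lemma~\ref{lemsubspace} gives $\iota_{\de_j/8}^{n_j} B_{Y^\ast}\neq\varnothing$, hence $\Sz(Y,\de_j/8)\geq n_j+1 = \Sz(X,\de_j)$. Therefore
\[
\mathsf{p}(Y)\geq\limsup_{j\to\infty}\frac{\log\Sz(Y,\de_j/8)}{|\log(\de_j/8)|}\geq\limsup_{j\to\infty}\frac{\log\Sz(X,\de_j)}{|\log(\de_j/8)|}=\mathsf{p}(X),
\]
where the last equality holds because $|\log(\de_j/8)|/|\log\de_j|\to 1$ as $\de_j\to 0^+$ and because along $(\de_j)$ the quotient $\frac{\log\Sz(X,\de_j)}{|\log\de_j|}$ converges to $\mathsf{p}(X)$ by construction. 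Combined with $\mathsf{p}(Y)\leq\mathsf{p}(X)$ this gives $\mathsf{p}(Y)=\mathsf{p}(X)$.

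\emph{The main obstacle} I anticipate is purely bookkeeping: one must be sure that $\Sz(X)=\omega$ guarantees each $\Sz(X,\de_j)$ is finite (so that $n_j\in\N$ and Lemma~\ref{lemsep} applies with a genuine natural-number-length tuple), and that $\mathsf{p}(X)>0$ is not needed — if $\mathsf{p}(X)=0$ the statement is trivially witnessed by any separable subspace via the upper bound, so one may as well assume $\mathsf{p}(X)>0$, whence $\Sz(X,\de_j)\to\infty$ and $n_j\to\infty$, which is harmless. A minor point is that Lemma~\ref{lemsep} is stated for a fixed finite tuple, so one genuinely needs countably many applications (one per $j$) and the separability of the resulting union; this is exactly the device already used in the preceding proposition, so no new idea is required. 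Everything else is the same ``replace constants by powers'' translation, and the factor losses $\de_j\mapsto\de_j/4\mapsto\de_j/8$ wash out in the logarithmic limit.
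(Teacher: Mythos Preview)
Your proof is correct and follows essentially the same strategy as the paper's: apply Lemma~\ref{lemsep} to constant tuples $(\de,\ldots,\de)$ to obtain separable witnesses, take the closed linear span of countably many of them, and use Lemma~\ref{lemsubspace} to transfer the nonemptiness of iterated derivations from each witness to the union. The only cosmetic difference is that the paper takes the span over \emph{all} pairs $(n,\de)$ with $\de\in\Q_+$, thereby obtaining the uniform inequality $\Sz(Y,\e/16)\geq\Sz(X,\e)$ for every $\e>0$, whereas you pick a specific sequence $(\de_j,n_j)$ realizing the power type; since $\mathsf{p}(X)$ is an actual limit (not merely a $\limsup$), either choice suffices.
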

\begin{proof}
For any $\e>0$, $n\in\N$ let $Y(n,\e)$ be a~separable subspace of $X$ constructed according to Lemma~\ref{lemsep} applied to $\e_1=\ldots=\e_n=\e$. Define $$
Y=\oo{\span}\,\bigcup_{n=1}^\infty\bigcup_{\de\in\Q_{+}}Y(n,\de).
$$
Clearly, $Y$ is a separable subspace of $X$. We shall show that $\mathsf{p}(Y)=\mathsf{p}(X)$.

Consider any $\e>0$ and $n\in\N$ with  $\iota_{\e}^nB_{X^\ast}\neq\varnothing$. Pick a~rational number $\de\in(\e/2,\e)$; of course, we have $\iota_{\de}^n B_{X^\ast}\neq\varnothing$ and hence $\iota_{\de/4}^nB_{Y(n,\de)^\ast}\neq\varnothing$. By Lemma~\ref{lemsubspace}, we have $\iota_{\de/8}^n B_{Y^\ast}\neq\varnothing$, whence $\iota_{\e/16}^n B_{Y^\ast}\neq\varnothing$. Consequently, we have shown that $\Sz(Y,\e/16)\geq\Sz(X,\e)$ which yields $\mathsf{p}(Y)\geq\mathsf{p}(X)$.
\end{proof}

\begin{proof}[Proof of Main Theorem (continued)]
Clearly, $\mathsf{p}(X\hat{\otimes}_{\e}Y) \geq \max\{\mathsf{p}(X),\mathsf{p}(Y)\}$. Therefore, in view of Proposition \ref{proppower}, it suffices to show that for any separable subspace $Z$ of $X\hat{\otimes}_{\e}Y$ we have $\mathsf{p}(Z) \leq \max\{\mathsf{p}(X),\mathsf{p}(Y)\}$. Let $\{\mathbf{z}_k\colon k\in\N\}$ be a~dense subset of $Z$. Since $X\otimes Y$ is dense in $X\hat{\otimes}_{\e}Y$, we can write $\mathbf{z}_k = \lim_{n\to\infty}\sum_{m=1}^{M(k,n)} x_{m,n}^{(k)} \otimes y_{m,n}^{(k)}$ for $k\in\N$. Set
$$
X_0=\oo{\span}\,\{x_{m,n}^{(k)}\colon k,n\in\N,\ 1\leq m\leq M(k,n)\}
$$
and
$$
Y_0=\oo{\span}\,\{y_{m,n}^{(k)}\colon k,n\in\N,\ 1\leq m\leq M(k,n)\}.
$$
Clearly, $X_0$ and $Y_0$ are separable subspaces of $X$ and $Y$, respectively. Therefore, by the `separable part', we have $\mathsf{p}(X_0 \hat{\otimes}_{\e} Y_0) = \max\{\mathsf{p}(X_0),\mathsf{p}(Y_0)\} \leq \max\{\mathsf{p}(X),\mathsf{p}(Y)\}$ and since $Z$ embeds in $X_0 \hat{\otimes}_{\e} Y_0$, the assertion follows.
\end{proof}

\section{Direct sums}
In this section, we extend some of our theorems recently obtained in \cite{dk}. First, observe that using results of Section~4 one can easily prove nonseparable analogues to \cite[Thm.~3.2 and~5.9]{dk}.
\begin{theorem}
For any sequence $(X_n)_{n=1}^\infty$ of Banach spaces with uniformly summable Szlenk index the space $X=(\bigoplus_{n=1}^\infty X_n)_{c_0}$ also has summable Szlenk index.
\end{theorem}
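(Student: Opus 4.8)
The plan is to reduce the statement to the separable case, which is \cite[Thm.~3.2]{dk} (the present theorem restricted to separable summands), by exploiting the separable determinacy of summability of the Szlenk index established just above: by that proposition it suffices to prove that \emph{every separable subspace of $X=(\bigoplus_{n=1}^\infty X_n)_{c_0}$ has summable Szlenk index}. So the only real work is to recognize such a separable subspace as (isometrically) a subspace of a $c_0$-sum of separable spaces with uniformly summable Szlenk index.

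Concretely, I would fix a separable subspace $Z\subseteq X$ together with a dense sequence $(w_k)_{k=1}^\infty$ in $Z$. For $n\in\N$, let $P_n\colon X\to X_n$ be the $n$th coordinate projection (of norm one) and put $Z_n=\overline{\span}\{P_nw_k\colon k\in\N\}$, a separable subspace of $X_n$. Since each $P_n$ is continuous and $(w_k)$ is dense in $Z$, every $z\in Z$ has all of its coordinates $P_nz$ in the corresponding $Z_n$, and because $(\|P_nz\|)_n\in c_0$ the map $z\mapsto(P_nz)_n$ realizes $Z$ isometrically as a subspace of $W:=(\bigoplus_{n=1}^\infty Z_n)_{c_0}$. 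Now let $M$ be a constant witnessing summability of the Szlenk index for all of the $X_n$'s simultaneously. Applying Lemma~\ref{lemsubspace} to the inclusions $Z_n\subseteq X_n$, each $Z_n$ has summable Szlenk index with constant $2M$; in particular the $Z_n$ are separable with uniformly summable Szlenk index, so \cite[Thm.~3.2]{dk} applies and yields that $W$ has summable Szlenk index. A final application of Lemma~\ref{lemsubspace} to the inclusion $Z\subseteq W$ then transfers summability back to $Z$, and since $Z$ was an arbitrary separable subspace of $X$, the proposition on separable determinacy completes the argument.

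I do not expect a genuine obstacle here, as every step is a direct invocation of a result already at hand; the two points that merely need an explicit (but routine) word are that the coordinates of vectors of $Z$ really do lie in the smaller spaces $Z_n$, so that the isometric embedding $Z\hookrightarrow W$ is legitimate, and that we have indeed arranged both hypotheses of \cite[Thm.~3.2]{dk}, namely separability of the summands and uniformity of their summability constant (the latter being exactly what the factor $2M$ from Lemma~\ref{lemsubspace} provides).
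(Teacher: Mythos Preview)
Your argument is essentially the same as the paper's: reduce to separable subspaces via the separable-determinacy proposition, embed any such subspace into the $c_0$-sum of the closed spans of its coordinates, and apply \cite[Thm.~3.2]{dk}. The only difference is that you spell out, via Lemma~\ref{lemsubspace}, the (routine) uniform summability of the $Z_n$'s and the final transfer from $W$ to $Z$, which the paper leaves implicit.
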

\begin{proof}
We shall show that any separable subspace $Y$ of $X$ has summable Szlenk index. Let $\{\mathbf{y}_k\colon k\in\N\}$ be a~dense subset of $Y$ and write $\mathbf{y}_k=(y_{n,k})_{n=1}^\infty$. For each $n\in\N$ set $Y_n=\oo{\span}\{y_{n,k}\colon k\in\N \}$ which is plainly a~separable subspace of $X_n$. By \cite[Thm.~3.2]{dk}, the space $Z=(\bigoplus_{n=1}^\infty Y_n)_{c_0}$ has summable Szlenk index. Now, it suffices to observe that $Y$ embeds in $Z$.
\end{proof}

In a very similar way one can derive the next result which is a~nonseparable version of \cite[Thm.~5.9]{dk} (for any unexplained notion we refer the reader to \cite[\S 5]{dk}).
\begin{theorem}\label{thmpower}
Let $E$ be a Banach space with a normalized, shrinking, $1$-unconditional basis $(e_n)_{n=1}^\infty$ such that for some $p\in [1,\infty)$ its dual $E^\ast$ is asymptotic $\ell_{p}$ with respect to $(e_n^\ast)_{n=1}^\infty$. Then for every power type bounded sequence $(X_n)_{n=1}^\infty$ of Banach spaces we have
$$
\mathsf{p}\biggl(\!\Bigl(\bigoplus_{n=1}^\infty X_n\Bigr)_{\!\! E}\biggr)=\max\bigl\{p,\,\mathfrak{p}(X_n)_{n=1}^\infty\bigr\}.
$$
\end{theorem}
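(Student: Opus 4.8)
The plan is to mimic the pattern used in Section~4 for the injective tensor product: first establish the nonseparable estimate by reducing to the separable case of \cite[Thm.~5.9]{dk}, and then prove the reverse inequality directly. The inequality $\mathsf{p}((\bigoplus_n X_n)_E)\geq\max\{p,\mathfrak{p}(X_n)_{n=1}^\infty\}$ should be essentially formal: each $X_n$ embeds isometrically (as the $n$th coordinate) in $Y:=(\bigoplus_n X_n)_E$, so $\mathsf{p}(Y)\geq\sup_n\mathsf{p}(X_n)$, and since $(e_n)$ is $1$-unconditional one checks that the closed span of unit vectors in distinct coordinates is isometric to $E$, giving $\mathsf{p}(Y)\geq\mathsf{p}(E)$; finally $\mathsf{p}(E)=p$ because $E^\ast$ is asymptotic $\ell_p$ with respect to $(e_n^\ast)$, which (via the tree characterisation of the Szlenk index recalled in Section~2, or directly via \cite{dk}) forces the Szlenk index of $E$ to be $\omega$ with power type exactly $p$. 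The delicate point here is only to confirm that $\mathfrak{p}$, as defined in \cite[\S5]{dk}, indeed satisfies $\mathfrak{p}(X_n)_{n=1}^\infty\geq\sup_n\mathsf{p}(X_n)$; this is immediate from its definition as a supremum-type quantity.

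For the upper bound $\mathsf{p}(Y)\leq\max\{p,\mathfrak{p}(X_n)_{n=1}^\infty\}$, I would argue exactly as in the last displayed proof of the Main Theorem. By Proposition~\ref{proppower} it suffices to bound $\mathsf{p}(Z)$ for an arbitrary separable subspace $Z$ of $Y$. Choosing a dense set $\{\mathbf{z}_k:k\in\N\}\subset Z$ and writing $\mathbf{z}_k=(z_{n,k})_{n=1}^\infty$ with $z_{n,k}\in X_n$, set $X_n^0=\oo{\span}\{z_{n,k}:k\in\N\}$, a separable subspace of $X_n$. Then $Z$ embeds isometrically in $(\bigoplus_n X_n^0)_E$ (the finite $E$-sums of coordinate vectors are dense, and each lies in this smaller direct sum), and the sequence $(X_n^0)$ is again power type bounded with $\mathfrak{p}(X_n^0)_{n=1}^\infty\leq\mathfrak{p}(X_n)_{n=1}^\infty$, since passing to subspaces can only decrease the relevant tree orders (this is precisely the content of Lemma~\ref{lemsubspace} and Lemma~\ref{lemsep} applied coordinatewise, or can be read off from the monotonicity properties in \cite{dk}). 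Applying the separable result \cite[Thm.~5.9]{dk} to $(X_n^0)$ gives $\mathsf{p}((\bigoplus_n X_n^0)_E)=\max\{p,\mathfrak{p}(X_n^0)_{n=1}^\infty\}\leq\max\{p,\mathfrak{p}(X_n)_{n=1}^\infty\}$, and hence $\mathsf{p}(Z)$ is bounded by the same quantity; taking the supremum over separable $Z\subseteq Y$ and invoking Proposition~\ref{proppower} finishes the argument.

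The main obstacle I anticipate is bookkeeping around the hypotheses on $E$: one must make sure that the separable reduction does not destroy any of them. The basis $(e_n)$ and its properties (normalized, shrinking, $1$-unconditional, $E^\ast$ asymptotic $\ell_p$) live on the fixed space $E$ and are untouched by the passage $X_n\rightsquigarrow X_n^0$, so \cite[Thm.~5.9]{dk} applies verbatim to $(\bigoplus_n X_n^0)_E$; the only thing requiring genuine care is the inequality $\mathfrak{p}(X_n^0)_{n=1}^\infty\leq\mathfrak{p}(X_n)_{n=1}^\infty$ together with the fact that $(X_n^0)$ remains power type bounded, both of which should follow from the separable-determinacy results of Section~4 (in particular Propositions on separable determination of $\mathsf p$) applied to each coordinate, or directly from the definitions in \cite{dk}. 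Once that is in hand, the proof is a routine adaptation of the tensor-product argument and I would present it in the same compressed style.
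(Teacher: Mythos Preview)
Your proposal is correct and follows exactly the route the paper intends: the paper does not give a detailed proof of this theorem at all, but merely states that it ``can be derived in a very similar way'' as the preceding theorem on $c_0$-sums with uniformly summable Szlenk index, whose proof is precisely the separable-reduction argument you describe (take a dense sequence in a separable subspace $Z$, form the coordinate subspaces $X_n^0$, observe $Z$ embeds in $(\bigoplus_n X_n^0)_E$, and apply the separable result \cite[Thm.~5.9]{dk}). Your additional discussion of the lower bound and of the stability of the hypotheses on $E$ and of $\mathfrak{p}$ under passage to subspaces is more than the paper itself supplies, but is in the right spirit and introduces no error.
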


As it was shown in \cite[Ex.~5.12 and~5.13]{dk}, the assumption that $E^\ast$ is asymptotic $\ell_p$ with respect to the dual basis is generally essential. However, in the case where all $X_n$'s are finite-dimensional one can reduce the assumptions on $E$ to a~minimum.
\begin{theorem}
Let $E$ be a Banach space with a normalized $1$-unconditional basis $(e_n)_{n=1}^\infty$ and assume $\Sz(E)=\omega$. Then for any sequence $(F_n)_{n=1}^\infty$ of finite-dimensional Banach spaces we have
$$
\mathsf{p}\biggl(\!\Bigl(\bigoplus_{n=1}^\infty F_n\Bigr)_{\!\! E}\biggr)=\mathsf{p}(E).
$$
\end{theorem}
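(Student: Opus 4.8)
The plan is to show $\mathsf{p}((\bigoplus_{n=1}^\infty F_n)_E)=\mathsf{p}(E)$ by proving the two inequalities separately, where the nontrivial direction is $\mathsf{p}((\bigoplus F_n)_E)\leq\mathsf{p}(E)$. First I would dispose of the easy inequality $\mathsf{p}((\bigoplus F_n)_E)\geq\mathsf{p}(E)$: the natural copy of $E$ sitting inside $(\bigoplus F_n)_E$ (pick a norm-one vector in each $F_n$ and span) is $1$-complemented, hence $\Sz(E,\e)\leq\Sz((\bigoplus F_n)_E,\e)$ for every $\e$, which gives the desired inequality between power types. (If some $F_n$ is trivial one restricts to the infinite set of indices where $F_n\neq\{0\}$; since the power type only depends on the tail behaviour this is harmless.)

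For the main inequality I would set $V=E$ with its $1$-unconditional basis $(e_n)$ and try to realise $X:=(\bigoplus_{n=1}^\infty F_n)_E$ as a space with a shrinking FDD satisfying subsequential $V$-upper block estimates, so that Proposition~\ref{Prop1} applies and yields $\mathsf{p}(X)\leq\mathsf{p}(E)$. The natural FDD is $\mathsf{E}=(E_n)$ with $E_n=F_n$ (viewed as the $n$th coordinate block of $X$). The $V$-upper block estimate here is in fact an \emph{equality} up to the unconditionality constant: if $(z_j)$ is a normalized block sequence with respect to $(F_n)$ and $m_j=\min\supp_{\mathsf{E}}z_j$, then grouping the coordinates of $\sum_j a_j z_j$ and using $1$-unconditionality of $(e_n)$ in $E$ gives $\|\sum_j a_j z_j\|\leq\|\sum_j |a_j|\,\|z_j\|_{F}\,e_{?}\|$-type bounds; one checks that $\|\sum_j a_j z_j\|_X\leq\|\sum_j a_j e_{m_j}\|_E$ because each $z_j$ is supported on a set of coordinates $\geq m_j$ and $\leq$ the start of the next block, and the $E$-norm of a vector supported on an interval is controlled by $|a_j|$ times $e_{m_j}$ after using unconditionality and that $\|z_j\|=1$ forces the $E$-coordinates of $z_j$ to have $E$-norm $1$. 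So $\mathsf{E}$ satisfies subsequential $C$-$V$-upper block estimates with $C=1$ (or the unconditionality constant of $E$, which is $1$ here).

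The remaining point, and the one I expect to be the only real obstacle, is that Proposition~\ref{Prop1} requires the FDD $\mathsf{E}=(F_n)$ to be \emph{shrinking}. This is where the hypothesis $\Sz(E)=\omega$ enters, together with finite-dimensionality of the $F_n$'s: one must argue that $X^\ast$ is the closed linear span of $c_{00}(\bigoplus F_n^\ast)$. Since each $F_n$ is finite-dimensional, $X^\ast=(\bigoplus F_n^\ast)_{E^\ast}$ isometrically, and the $c_{00}$-vectors are dense in this provided the dual basis $(e_n^\ast)$ is shrinking in $E^\ast$ in the appropriate sense — equivalently provided $E$ contains no isomorphic copy of $\ell_1$ spanned by a block basis, which follows from $\Sz(E)=\omega$ (indeed $\Sz(E)\leq\omega$ forces $E$ to be Asplund, hence $E^\ast$ is $w^\ast$-sequentially dense in itself over the $c_{00}$ span, and the $1$-unconditional basis of $E$ is then shrinking by James' theorem since $E$ contains no $\ell_1$). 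I would spell this out: $\Sz(E)\leq\omega$ $\Rightarrow$ $E$ is Asplund $\Rightarrow$ $E$ has no subspace isomorphic to $\ell_1$ $\Rightarrow$ the $1$-unconditional basis $(e_n)$ is boundedly complete-or-shrinking; unconditionality plus no-$\ell_1$ gives that $(e_n)$ is shrinking, hence $(F_n)$ is a shrinking FDD for $X$. Then Proposition~\ref{Prop1} with $V=E$ gives $\mathsf{p}(X)\leq\mathsf{p}(E)$, and combined with the first paragraph this completes the proof.
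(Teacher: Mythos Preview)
Your lower bound $\mathsf{p}(X)\geq\mathsf{p}(E)$ is fine: the embedding of $E$ into $X=(\bigoplus F_n)_E$ via a choice of unit vectors already suffices.

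The upper bound, however, has a genuine gap. You assert that the FDD $(F_n)$ of $X$ satisfies subsequential $1$-$E$-upper block estimates, and you justify this with a vague appeal to $1$-unconditionality. This is false in general. Unwinding the definitions, the required estimate is equivalent to the statement that every normalized block sequence $(w_j)$ in $E$ (with respect to $(e_n)$) satisfies
\[
\Bigl\|\sum_j a_j w_j\Bigr\|_E \leq C\,\Bigl\|\sum_j a_j e_{m_j}\Bigr\|_E,\qquad m_j=\min\supp w_j,
\]
for some uniform $C$. But take $E$ to be $c_{00}$ completed under
\[
\Bigl\|\sum_n a_n e_n\Bigr\|=\max\Bigl(\sup_k|a_{2k-1}|,\ \Bigl(\sum_k|a_{2k}|^2\Bigr)^{1/2}\Bigr),
\]
which is a $1$-unconditional basis with $E\cong c_0\oplus\ell_2$, hence $\Sz(E)=\omega$. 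The normalized blocks $w_k=e_{2k-1}+e_{2k}$ have $m_k=2k-1$, yet
\[
\Bigl\|\sum_{k=1}^N w_k\Bigr\|=\sqrt{N}\quad\text{while}\quad \Bigl\|\sum_{k=1}^N e_{2k-1}\Bigr\|=1,
\]
so no constant $C$ works. Thus Proposition~\ref{Prop1} is not available as a black box here.

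Your strategy can be rescued by \emph{adapting} the proof of Proposition~\ref{Prop1} rather than quoting its statement. Given an $\ell_1^+$-$\rho$-weakly null tree $(z_\alpha)$ on $S_X$, prune so that branches and $s$-subsequences are block sequences with respect to $(F_n)$, and set
\[
\tilde z_\alpha \;=\; \sum_n \|P_n^{\mathsf E}z_\alpha\|_{F_n}\, e_n \in S_E.
\]
Since the supports along any branch are disjoint and $(e_n)$ is $1$-unconditional, for nonnegative scalars one has the \emph{identity}
\[
\Bigl\|\sum_j a_j z_{\gamma_j}\Bigr\|_X=\Bigl\|\sum_j a_j \tilde z_{\gamma_j}\Bigr\|_E,
\]
so $(\tilde z_\alpha)$ is an $\ell_1^+$-$\rho$-weakly null tree on $S_E$ (weak nullity uses that $(e_n)$ is shrinking, which you correctly deduced from $\Sz(E)\leq\omega$). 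Theorem~\ref{AJO_theorem} then gives $\Sz(X,\e)\leq\Sz(E,\delta)$ for suitable $\delta$, hence $\mathsf{p}(X)\leq\mathsf{p}(E)$.

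For comparison, the paper's proof works on the dual side: it produces a weak$^\ast$-null tree in $X^\ast=(\bigoplus F_n^\ast)_{E^\ast}$ via Lemma~\ref{gkl_lemma}, transfers it to $E^\ast$ by the same ``norms map'' $\mathbf{x}^\ast\mapsto\sum_n\|x_n^\ast\|e_n^\ast$, and then controls the length of branches using the Godefroy--Kalton--Lancien weak$^\ast$-AUC renorming of $E$ with power-type modulus $q>\mathsf{p}(E)$. Both arguments hinge on the same isometric transfer to $E$ (or $E^\ast$); the paper's route avoids Theorem~\ref{AJO_theorem} in favour of the renorming estimate, whereas your (repaired) route stays on the predual side and uses the $\ell_1^+$-index machinery directly.
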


\begin{proof}
Set $X=(\bigoplus_{n=1}^\infty F_n)_E$ and fix any $\e\in (0,1)$ such that $\iota_\e^N B_{X^\ast}\neq\varnothing$ for some $N\in\N$. Then, by Lemma \ref{gkl_lemma}, there is a~weak$^\ast$-null tree $(\mathbf{x}_\alpha^\ast)_{\alpha\in S_N}$ on $X^\ast$ of order $N$ such that $\n{\mathbf{x}_\alpha^\ast}\geq\frac{1}{4}\e$ for each $\alpha\in S_N$ and $\n{\sum_{\alpha\in\Gamma}\mathbf{x}_a^\ast}\leq 1$ for each branch $\Gamma\subset S_N$. Since $\Sz(E)=\omega$, the space $E^\ast$ is separable and hence the basis $(e_n)_{n=1}^\infty$ is shrinking (see, {\it e.g.}, \cite[Thm. 3.3.1]{ak}). Therefore, $X^\ast=(\bigoplus_{n=1}^\infty F_n^\ast)_{E^\ast}$ and we shall write $\mathbf{x}_\alpha^\ast=(x_{\alpha,n}^\ast)_{n=1}^\infty$ for $\alpha\in S_N$.

By slightly shrinking $\e$, if necessary, we may assume that all $\mathbf{x}^\ast_\alpha$'s are finitely supported (with respect to the FDD $(F_n^\ast)_{n=1}^\infty$). Further, by using an easy prunning procedure, we may also assume that the nodes along any branch have disjoint supports ({\it i.e.} each branch yields a~block sequence with respect to $(F_n^\ast)_{n=1}^\infty$). Consequently, we can build a weak$^\ast$-null tree, still denoted by $(\mathbf{x}^\ast_\alpha)_{\alpha\in S_N}$, on $X^\ast$ of order $N$ such that the following conditions are satisfied:
\begin{enumerate}
\item[\rm(i)] $\n{\mathbf{x}^\ast_\alpha}\geq\frac18\e$ for each $\alpha\in S_N$;
\vspace*{2mm}
\item[\rm(ii)] $\big\|\sum_{\alpha\in\Gamma}\mathbf{x}^\ast_\alpha\big\| \leq1$ for each branch $\Gamma\subset S_N$;
\vspace*{2mm}
\item[\rm(iii)] $(\mathbf{x}^\ast_\alpha)_{\alpha\in\Gamma}$ is a block sequence for each branch $\Gamma\subset S_N$.
\end{enumerate}
(Condition (ii) is satisfied, because the basis $(e_n^\ast)_{n=1}^\infty$ is also $1$-unconditional.)

Choose any $q>\mathsf{p}(E)$. By \cite[Thm. 4.8]{gkl} (see also \cite[Prop. 2.3]{dk}), there exist a~norm $\abs{\,\cdot\,}$ on $E$ and $c>0$ with the following properties:
\begin{itemize}
\item[\rm(i)] $\frac12\n{x}\leq\abs{x}\leq\n{x}$ for every $x\in E$;
\vspace*{2mm}
\item[\rm(ii)] if $x^\ast\in E^\ast$ and $(x_n^\ast)_{n=1}^\infty$ is a~weak$^\ast$-null sequence with $\abs{x_n^\ast}\geq\tau$ for some $\tau>0$ and every $n\in\N$, then
$$
\liminf_{n\to\infty}\abs{x^\ast+x_n^\ast}\geq\big(\abs{x^\ast}^q+c\tau^q\big)^{1/q}.
$$
\end{itemize}
(We use the same symbol for the corresponding dual norm; of course, $\n{x^\ast}\leq \abs{x^\ast}\leq 2\n{x^\ast}$ for every $x^\ast\in E^\ast$.)

Since $F_n$'s are finite-dimensional, we have $\lim_{\nu\to\infty}\n{x_{\alpha\smallfrown \nu,n}^\ast}=0$ for any $\alpha\in S_{N-1}$ and $n\in\N$. It means that each sequence of the form $(\sum_{n=1}^\infty \n{x_{\alpha\smallfrown\nu,n}^\ast}e_n^\ast)_{\nu>\max \alpha}$, where $\alpha\in S_{N-1}$, is weak$^\ast$-null. Therefore, using properties of the tree $(\mathbf{x}^\ast_\alpha)_{\alpha\in S_N}$ and the norm $\abs{\,\cdot\,}$, we obtain

\begin{equation*}
\begin{split}
1 &\geq \liminf_{\nu_2\to\infty} \cdots \liminf_{\nu_N\to\infty} \Bigg\|\sum_{j=1}^N \mathbf{x}_{(\nu_1,\ldots,\nu_j)}^\ast\Bigg\| = \liminf_{\nu_2\to\infty} \cdots \liminf_{\nu_N\to\infty} \Bigg\|\sum_{n=1}^\infty\bigg\|\sum_{j=1}^N x_{(\nu_1,\ldots,\nu_j),n}^\ast\bigg\| e_n^\ast\Bigg\| \\
&= \liminf_{\nu_2\to\infty} \cdots \liminf_{\nu_N\to\infty} \Bigg\|\sum_{n=1}^\infty\sum_{j=1}^N \big\|x_{(\nu_1,\ldots,\nu_j),n}^\ast\big\| e_n^\ast\Bigg\|\\
&\geq \frac12\liminf_{\nu_2\to\infty} \cdots \liminf_{\nu_N\to\infty} \Bigg|\sum_{j=1}^N\sum_{n=1}^\infty \big\|x_{(\nu_1,\ldots,\nu_j),n}^\ast\big\| e_n^\ast\Bigg|\\
&\geq\frac12\left(\bigg|\sum_{n=1}^\infty \big\|x_{(\nu_1),n}^\ast\big\| e_n^\ast\bigg|^q+c(N-1)\Big(\frac18\e\Big)^q\right)^{\! 1/q} \geq \frac{1}{16}(cN)^{1/q}\e.
\end{split}
\end{equation*}
Thus, for some constant $C>0$, we have $N\leq C\e^{-q}$ and hence $\mathsf{p}(X)\leq q$.
\end{proof}
We conclude the paper with posing a~question closely related to our present work. 

\begin{problem}
Suppose $X$ and $Y$ are separable Banach spaces with summable Szlenk index. Does necessarily $X\inj Y$ have summable Szlenk index as well?
\end{problem}

\subsection*{Acknowledgements}
The first-named author was supported by the University of Silesia Mathematics Department (Iterative Functional Equations and Real Analysis program).

\bibliographystyle{amsplain}

\end{document}